\newcommand{\vol}{\textrm{vol}}
\newcommand{\E}{\textrm{E}}
\newcommand{\Var}{\textrm{Var}}
\newcommand{\area}{\textrm{area}}
\newtheorem{lemma}{Lemma}
\newtheorem{definition}{Definition}
\newtheorem{theorem}{Theorem}
\title{The Small-Community Phenomenon in
Networks}
\author{Angsheng Li\footnote{State Key Laboratory of
Computer Science, Institute of Software, Chinese Academy of Sciences, email:  angsheng@ios.ac.cn} \\Chinese Academy of Sciences
\and
Pan Peng\footnote{State Key Laboratory of
Computer Science, Institute of Software, Chinese Academy of Sciences, email:  pengpan@ios.ac.cn} \\Chinese Academy of Sciences
}
\begin{document}

\maketitle

%%%%%%%%%%%%%%%%%%%%%%%%%%%%%%%%%%%%%
% BEGIN BODY of Document

%\begin{DOCheader}

\begin{abstract}We investigate several geometric models of network which
simultaneously have some nice global properties, that the small
diameter property, \textit{the small-community phenomenon}, which is
defined to capture the common experience that (almost) every one in
our society belongs to some meaningful \textit{small} communities by
the authors (2011), and that under certain conditions
on the parameters, the power law degree distribution, which
significantly strengths the results given by van den Esker (2008),
and Jordan (2010). The results above, together with our previous progress in Li and
Peng (2011), build a mathematical foundation for the study of
communities and the small-community phenomenon in various networks.

In the proof of the power law degree distribution, we develop the
method of {\it alternating concentration analysis} to build
concentration inequality by alternatively and iteratively applying
both the sub- and super-martingale inequalities, which seems
powerful, and which may have more potential applications.

\end{abstract}
%\DOCkeywords{Geometric random graph, preferential attachment, small-community phenomenon}

%\end{DOCheader}

\section{Introduction}
With the availability of massive datasets of many real world
networks, we are able to observe and study the underlying dynamic
mechanisms and many interesting phenomena in large-scale networks in
a quantitative way. Some properties such as sparse, high-clustering,
hierarchical structure, the power law degree distribution and small
diameter appear in a wide range of networks, ranging from Internet
graphs, collaboration graphs to PPI (Protein-Protein Interaction)
networks. Modeling these interesting properties and phenomena not
only provides us a good way to better understand how these networks
evolve and why these global phenomena occur through local growing
rules, but also gives us insights on the development of new
technologies or even cancer drugs.

A typical network always simultaneously exhibits several properties.
For example, in a Web graph, the nodes are web-pages and directed
edges are hyperlinks between the pages, the number of nodes with
in-degree $k$ is proportional to $k^{-\beta}$ for some constant
$\beta$, i.e., the in-degree sequence obeys the power law degree
distribution (\cite{AJB99:diameterWWW,KKRRT99:web}). It has also
been observed that the Web graphs have a small average
distance~\cite{AJB99:diameterWWW,BKMRRSTW00:structure}. In this
paper, when it is not confused, ``small'' means that the quantity is
a polylogarithmic function of the number of graph nodes.
Furthermore, the most community-like subgraphs in the large Web
graphs turn out to have size about $100$, which seems to be a
general property in many large real
networks~\cite{LLDM08:community,LLM10:empirical}. The above
mentioned three properties are by no means particular in the
technological networks, and they are shared also by a wide range of
social networks, such as the friendship network of LiveJournal.

The first two properties, i.e., the power law degree distribution
and the small diameter property, have been explored extensively in
the past decades. However, to our knowledge, the third property that
good communities in large-scale networks have small sizes is still
widely open due to the reason that there were no mathematical
definition of good communities in network, which motivates us to
mathematically study the common experiences, or observations, or
small experiments of the small-community phenomenon in networks.

 The authors proposed a mathematical definition for
communities based on the concept of conductance, defined the {\it
small-community phenomenon} in networks, and conjectured that small
communities are ubiquitous in various networks (referred to our earlier work~\cite{LP11:smallcommunity}). Intuitively, a given network is said to have the small-community phenomenon if almost every node in the network is
contained in some good community of small size (referred to
Section~\ref{sec:def} for the formal definition). We found
theoretical evidence for our conjecture that some classical network
models (e.g., Kleinberg's small world model~\cite{Kle00:smallworld}
and Ravasz-Barab\'{a}si Hierarchical model~\cite{RB03:hierarchy}) do
have the small-community phenomenon, though models without this
phenomenon exist.

There are also other reasons for us to make such a conjecture.
Firstly, we all have the common experience that everyone in the
society belong to some small meaningful groups which may correspond
to classmates, friends or relatives. Secondly, the existing
empirical studies provide us evidence that large communities are
rare in large networks and good communities are of small size.
Besides the direct evidence given
by~\cite{LLDM08:community,LLM10:empirical,GR09:small}, there are
also some implicit evidence. For example, \cite{Lan05:spectral} has
shown that spectral graph partitioning fails to generate highly
unbalanced cuts over many large scale social networks and
\cite{KB10:geocommunity} pointed out that this failure may be caused
by the abundance of small dense communities. In summary, we have
reasons to conjecture that the small communities are ubiquitous, at
least in many large social networks, which arises a number of new
problems in both theory and applications of the small-community
phenomenon in networks.

We are interested in \textit{evolving} models that simultaneously
have these ``good'' properties: the power law degree distribution,
the small diameter and the small-community phenomenon, which are
shared in typical Web graphs and large social networks. Models with
one or two properties are easily constructed in some natural way. In
particular, the power law degree distribution arises from the
preferential attachment scheme; the small diameter originates from a
broad class of graph processes~\cite{Bol01:randomgraphs}; the small
community may be caused by the notion of {\it homophily} that,
similar or close individuals have great tendency to associate with
each other, which is a common reason for two people establish a
relationship with each other in our society.

However, when trying to define a model that unify all the three
properties, we usually come across conflicts that are hard to
reconcile. Not strictly speaking, the first two properties usually
result from some expander like graphs while the small community
corresponds to highly structured graphs which seem anti-expander
like graphs to some extent~\cite{LP11:smallcommunity}. Still,
Ravasz-Barab\'{a}si Hierarchical model~\cite{RB03:hierarchy}
satisfies all these requirements as shown
in~\cite{LP11:smallcommunity}. However, the Ravasz-Barab\'{a}si
Hierarchical model has a very unnatural growing rule, which can only
capture very special networks.

Another good candidate may be the Geometric Preferential Attachment
(henceforth GPA) model introduced by Flaxman, Frieze and
Verta~\cite{FFV07:gpamodel1,FFV07:gpamodel2}, whose
motivation was to model networks with power law degree distribution
and small expansion. This model is defined on a unit-area spherical
surface $S$, on which distance can be naturally introduced. The
authors of~\cite{FFV07:gpamodel1} combined the rich-get-richer
effect and the concept of homophily in a simple way that every new
comer chooses neighbors only from those exiting vertices that are
not far from them using the preferential attachment scheme, and
proved that the power law distribution occurs under some conditions
of the parameters in the model. In~\cite{LP11:smallcommunity}, we
have shown that good communities exist for every node in a model
under these conditions. However, the found communities are of
relatively large size and the diameter is also large.

In the present paper, we will first study a base model that is a GPA
model with {\it additive fitness}. We generalize the result
of~\cite{Jor10:gpa} and show that under some appropriate conditions,
the base model have both the power law degree distribution and the
small-community phenomenon. However, in this situation the diameter
of the model is large. To resolve this problem, we try to
incorporate a simple growth rule into our base model that leads to
small diameter and does not change too much the degree sequence. The
rule we try to use is the uniform recursive tree, i.e., each time a
new vertex chooses a neighbor uniformly at random from exiting
vertices. It has been well known that such a simple process results
in a graph of diameter and maximum degree of order $\Theta(\ln n)$,
where $n$ is the number of generated
vertices~\cite{SM95:recursivetree}. We give two alternate ways to
incorporate this rule. Though the resulted two models are similar,
their structures are different. The first one is a {\it hybrid
model}, which can be regarded as a composition of two independent
parts: {\it a local graph}, which has the power law degree
distribution, and {\it a global graph}, which may connect vertices
that are far away. The hybrid model as a whole has the small
diameter and the small community structure. The second is a {\it
self-loop model}, in which we treat the additive fitness in our base
model as the number of self-loops attached with the new vertex. This
gives a new interpretation for the use of fitness in the
preferential attachment schemes. With some further operation, the
self-loop model is shown to have all the three good properties.

The methodology we use to show the power law degree
distribution may be of independent interest. The proof technique is
inspired by the work~\cite{Jor10:gpa}, who investigated the
\textit{asymptotic behavior} of the degree sequence of the base
model (see Section~\ref{sec:def}). In our proof of the concentration
inequalities, there are subtle restrictions on parameters for which
deeper mathematics is needed. Rather than using the coupling
techniques as that in~\cite{FFV07:gpamodel1,Esk08:geometric}, we
recursively utilize the submartingale and supermartingale
concentration inequalities~\cite{CL06:complex} to give a better bound
at each step, which will result in a sharp bound of the desired
quantity.

\textbf{Further related works} Avin studied a random
distance graph that incorporates both the Erd{\"o}s-R\'{e}nyi graph
and the random geometric graph~\cite{Avi08:rdg}. This graph is shown to have several
good properties, e.g., the small diameter and high clustering
coefficient et al. Hybrid model composed of a power law graph and a
grid-like local graph is studied by several groups of researchers,
see~\cite{CL04:hybrid,KB10:geocommunity,FG09:powerlaw}. Clusters or
communities based on the concept of conductance was studied
in~\cite{KVV04:clustering} and
~\cite{LLDM08:community,LLM10:empirical}, in which the spectral
algorithms and other approximation algorithms were used to detect
good clusters or communities.

 In Section~\ref{sec:def}, we will
introduce the definition of the small-community phenomenon as well
as our models, and then state the main results of the paper. In the next
three sections, we show that the models have the desired properties.
In Section~\ref{sec:parameter}, we discuss the effect of the choice
of a parameter on the properties of our proposed models. Finally, we
give a brief conclusion in Section~\ref{sec:conclu}.

\section{basic definitions, the model and main results}\label{sec:def}
\subsection{The small-community Phenomenon}
In a graph $G=(V,E)$, the degree of a node $v\in V$ is denoted as
$\deg_G(v)$. The volume of a subset of $S\subseteq V$ is defined to
be the sum of degrees of vertices in it, namely, $\vol(S)=\sum_{v\in
S} \deg_G(v)$.

Our definition of communities is inspired by the work of Leskovec
et~al.~\cite{LLDM08:community}, who used the conductance to measure
the goodness of a community. We introduced the concept of $(\alpha,
\beta, \gamma)$-community based on the conductance and the size of a
set of nodes~\cite{LP11:smallcommunity}.  The conductance $\mathrm\Phi(S)$
of $S$ is the ratio between the number of edges coming out of $S$
and the volume of it or its complement $\bar{S}$, whichever is
smaller, i.e.,
\begin{displaymath}
\mathrm \Phi(S) = \frac{|e(S,\bar{S})|}{\min\{\vol(S),\vol(\bar{S})\}}\enspace,
\end{displaymath}
where $e(S,T)$ denotes the set of edges with one endpoint in $S$ and
the other in $T$.

Now we formulate the $(\alpha, \beta, \gamma)$-community as follows:
\begin{definition}
Given a graph $G = (V, E)$ with $|V|=n$, a connected set $S \subset
V$ with $|S|=\omega(1)$ is a strong $(\alpha,\beta)$-community if
\begin{eqnarray}
\mathrm \Phi(S)\leq\frac{\alpha}{|S|^{\beta}}\enspace. \label{eqn:definition}
\end{eqnarray}

Moreover, if $|S|= O((\ln n)^{\gamma})$, then we say that $S$ is
a strong $(\alpha, \beta, \gamma)$-community.
\end{definition}

Note that in the above definition we require that the size of a
community is not too small (i.e., $|S|=\omega(1)$). This requirement
helps us to avoid the trivial case in our definition (when $|S|$ is
constant, it can always be treated as a proper community by choosing
large $\alpha$). In fact, a meaningful community in society always
can not be too small because of lack of requisite variety or other
group function~\cite{All04:alacrity}.

To characterize the feature that almost every one in the network
belongs to some small community, we give the following definition.
\begin{definition}
A network (model) $G$ is said to exhibit the small-community
phenomenon, if almost every\footnote{\textit{almost every} means
$1-o_n(1)$, where $n$ is the number of vertices in $G$} node belongs
to some $(\alpha, \beta, \gamma)$-community, where $\alpha, \beta,
\gamma>0$ are some global constants.
\end{definition}

\subsection{The Geometric Model}~\label{subsec:model}
The base model we will use is a geometric preferential attachment
model with additive fitness. Such a model has been studied
in~\cite{Esk08:geometric,Jor10:gpa} (see
also~\cite{FFV07:gpamodel1,FFV07:gpamodel2}). Assume that a
self-loop counts as degree $1$. The model is defined on a unit-area
spherical surface $S$ (i.e., the radius of the sphere is
$\frac{1}{2\sqrt{\pi}}$). Let $n$ be the number of vertices we are
going to generate. Let $\xi>0$ be an arbitrary constant and $m, r,
\delta=\xi m$ be some parameters which may depend on $n$ (Note that
this is the essential difference from the cases studied
in~\cite{Jor10:gpa}). Intuitively speaking, $m$ is the number of
edges we are going to add in each step; $r$ is the distance
restriction on the two endpoints of an edge; $\delta$ is the
additive fitness. Let $B_R(v)$ denote the spherical cap of radius
$R$ around $v$ in $S$, i.e., $B_R(v)=\{u\in S: \|u-v\|\leq R\}$,
where $\|\cdot\|$ denotes the angular distance on $S$. Let
$A_R=\area(B_R(v))$ be the area of the spherical cap of radius $R$,
which is independent of $v$.

\textbf{The base model}: We start the process from a graph $G_1$,
which is composed of a uniformly generated (from $S$) node $x_1$
with $2m$ self-loops. At each time $t+1$ for $t>0$, if $G_t=(V_t,
E_t)$, we first generate a new node $x_{t+1}$ uniformly at random
from $S$ and then connect it to some existing vertices or itself.
Specifically, if there is no node in $B_r(x_{t+1})$, then we add
$2m$ self-loops to $x_{t+1}$; if $B_r(x_{t+1})\cap V_t\neq
\emptyset$, then we choose independently $m$ contacts (with
replacement) from $B_r(x_{t+1})$ for the new comer such that for any $i$ with
$1\leq i\leq m$, the probability that some vertex $v\in
B_r(x_{t+1})$ is chosen as the $i$th contact is defined by
\begin{eqnarray}
\Pr[y_i^{t+1}=v]=\frac{\deg_t(v)+\delta}{\sum_{w\in B_r(x_{t+1})\cap
V_t}(\deg_t(w)+\delta)} \enspace. \label{eqn:prob}
\end{eqnarray}

Remark: in~\cite{Esk08:geometric} (also
in~\cite{FFV07:gpamodel1,FFV07:gpamodel2}), a self-loop parameter
$\alpha>2$ was introduced to avoid a technical problem when proving
the power law degree distribution. In their settings, a node $v\in
B_r(x_{t+1})$ is chosen as the contact with probability
\begin{eqnarray}
\frac{\deg_t(v)+\delta}{\max\{\sum_{w\in B_r(x_{t+1})\cap
V_t}(\deg_t(w)+\delta),\alpha (m+\delta/2)A_rt\}}\enspace,
\end{eqnarray}
where $\delta>-m$. The case of $\alpha=0$ is left open in these
papers. Jordan~\cite{Jor10:gpa} investigated the asymptotic
behavior of the degree sequence in the case of $\alpha=0$. In his study, $m, r, \delta>0$
are constants that not depend on $n$, which converges to infinity.
However, in our situation, we need a strong concentration result
such that the parameters may
depend on $n$. We will give such a result when $\alpha=0$ and $\delta>0$, which strengths the results in~\cite{Esk08:geometric,Jor10:gpa} and partially answers the open question in~\cite{FFV07:gpamodel1,FFV07:gpamodel2}.

We can show that when $\delta=\xi m>0$
and $r=r_0=n^{-\frac{1}{2}}(\ln n)^{c_0}$, where $c_0=c_0(\xi)$ is
large and may depend on $\xi$, the base model has the power law degree distribution
and the small-community phenomenon but does not have the small diameter. To
incorporate the missing property while not changing the other two
properties too much, we introduce some operations that essentially
generate a uniform recursive tree. We give two different operations
such that the resulted two variants of the base model both have the
three properties to some extent.

\begin{enumerate}
\item \textbf{The hybrid model}:
In this model, every edge has an attribute that indicates whether it
is a {\it local-edge} or a {\it long-edge}, which indicates that the
two endpoints of the edge are local- or long-contacts of each other.
A local- (or long-) edge contributes to the local- (or long-) degree
of both of its endpoints. We start from the $G_1^\mathrm H$ the same as $G_1$ in the above and let
the self-loops of $x_1$ be local-edges. At each step $t+1$ for
$t\geq 1$, to form $G_{t+1}^\mathrm H$ from $G_t^\mathrm H$, a new vertex $x_{t+1}$
is chosen uniformly at random from $S$. First we choose for the new
comer $m$ local-contacts $y_i, 1\leq i\leq m$, independently at
random as in the base model with $\deg_t(v)$ in Eq.
(\ref{eqn:prob}) denoting the \textit{local-degree} of $v$ at time $t$. Then
we choose for $x_{t+1}$ one other long contact $z$ uniformly from
$x_1,\cdots,x_{t}$.

This model can be seen as composed of two parts: a local power law
graph and a global uniform recursive tree, which can be generated in
two phases: firstly, we  can generate the local power law graph
following the rules used in the base geometric model and then
generate a recursive tree as follows: sequentially for $t\geq 1$,
$x_{t+1}$ connects a long-contact which is chosen uniformly at
random from $x_{1},\cdots, x_{t}$.

The independence of the local part and the global part of the hybrid
model conforms to our intuition that local contacts and long
contacts are formed by different mechanisms. Previous studies on
such a model usually has a global power law graph and a local
grid-like graph (see eg.~\cite{CL04:hybrid}), which is comparable
with ours.

\item \textbf{The self-loop model}:
In this model, every new node is born with $\delta$ {\it flexible
self-loops} which may be eliminated in later steps. Now we generate
$x_1$ uniformly at random from $S$ and add $2m+\delta$ self-loops to
it with $\delta\geq 2$ loops marked flexible. This is the start
graph $G_1^\mathrm S$. At each step $t+1$ for $t\geq 1$, to form $G_{t+1}^\mathrm S$
from $G_t^\mathrm S$, a new vertex $x_{t+1}$ is chosen uniformly at random
from $S$ and $\delta$ flexible self-loops are added to it. We first
choose $m$ contacts $y_i, 1\leq i\leq m$ independently at random as
in the base model with $\deg_t(v)$ in Eq. (\ref{eqn:prob}) denoting
\textit{the number of non-flexible edges} incident to $v$ at time $t$. Then
we choose for $x_{t+1}$ one other contact $z$ uniformly from the set
of existing nodes containing flexible self-loop(s)(such a set cannot
be empty because $x_t$ is a member of it) and delete one flexible
self-loop from both $x_{t+1}$ and $z$. The newly added edge
$(x_{t+1}, z)$ is marked flexible. Note that the edge-rewiring keeps
the degree of vertices unchanged, which facilitates the analysis of
its degree distribution.

This model can be seen as composed of two parts: a flexible part and
a non-flexible part, which can be generated in several phases: we
first generate the non-flexible part following the growth rules of
the base model. We then add $\delta$ flexible self-loops to each
vertex. Then sequentially for each $t\geq 1$, $x_{t+1}$ connects a
contact $z$ which is chosen uniformly at random from
$x_1,\cdots,x_{t}$, containing flexible self-loop(s), a flexible
self-loop of $x_{t+1}$ and $z$ is deleted and a new flexible edge
$(x_{t+1},z)$ is added.

We give a plausible explanation of the self-loops emerging in this
model. It is widely studied in social sciences that people in our
society have not only evident relationships with others, but some
implicitly one-sided ``parasocial'' interactions with the
celebrities, virtual characters and so on, in which relationship
only one part knows a great deal about the other, but the other does
not~\cite{HR56:para}. Such a relationship can barely be reflected by
the usually used friendship networks, which mainly coins the
two-sided friendship. Our model incorporates the parasocial
relationships as self-loops and the edge-rewiring may be roughly
interpreted as that the long-distance relationship is established at
the expense of its parasocial connections.
\end{enumerate}

\subsection{Main Results}
Our main results are that the two models have rather good
properties. Assume that $\delta=m\xi$, where $\xi>0$ is some
constant and $r_0=n^{-1/2}(\ln n)^{c_0}$ for some large constant
$c_0$ which may depend on $\xi$.

For $r\geq r_0$, it is obvious that the diameter of the base model is $\Omega(1/r)=\Omega(n^{1/2}(\ln n)^{-c_0})$ (see Section~\ref{sec:diameter}), which is large, while the short diameters of the uniform recursive trees imply the small
diameter results in our two generalized models.
\begin{theorem}(Small Diameter Property)
\label{thm:diameter}
\begin{enumerate}
\item For any $m\geq 1, r>0$, with high probability, the diameter of $G_n^\mathrm H$ is $O(\ln
n)$.
\item For $m\geq K_1(\xi)\ln n$ and $r>0$, with high probability,
the diameter of $G_n^\mathrm S$ is $O(\ln n)$, where $K_1(\xi)$ is some constant depending on
$\xi$.
\end{enumerate}
\end{theorem}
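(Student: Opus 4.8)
The key observation is that adding edges to a graph cannot increase the distance between any two vertices, so in each case it suffices to exhibit a spanning subgraph of small diameter. Both of our models were designed precisely so that such a subgraph is (a close variant of) a uniform recursive tree, for which it is classical that the diameter on $n$ vertices is $\Theta(\ln n)$ with high probability~\cite{SM95:recursivetree}.

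For part~1 this is immediate: by construction, the \emph{long}-edges of $G_n^{\mathrm H}$ --- the edges $(x_{t+1},z)$ with $z$ chosen uniformly from $\{x_1,\dots,x_t\}$ --- form a uniform recursive tree $T$ on all $n$ vertices, and $T$ is a spanning subgraph of $G_n^{\mathrm H}$. Hence $\mathrm{diam}(G_n^{\mathrm H})\le\mathrm{diam}(T)=O(\ln n)$ with high probability, and no assumption on $m$ or $r$ is needed.

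For part~2 the flexible edges again form a spanning tree $T'$ of $G_n^{\mathrm S}$ (one flexible edge is added per step, always joining the new vertex to an older one). The plan is to show that, off an event $\mathcal E^c$ of probability $o(1)$, $T'$ is distributed \emph{exactly} as a uniform recursive tree, and then conclude as in part~1. Take $\mathcal E=\{\text{no vertex runs out of flexible self-loops before step }n\}$: on $\mathcal E$ the pool from which the contact $z$ is drawn at step $t+1$ is all of $\{x_1,\dots,x_t\}$, so the parent of $x_{t+1}$ is uniform on $\{x_1,\dots,x_t\}$, which is the defining rule of the uniform recursive tree. To bound $\Pr[\mathcal E^c]$ I would first record a \emph{deterministic} fact: since each vertex needs at least $\delta-1$ ``chosen-as-contact'' events to exhaust its loops, and only $s-2$ such events have occurred by step $s$, at most $(s-2)/(\delta-1)$ vertices can have dropped out, so the pool at step $s$ always has size at least $(s-1)\bigl(1-\tfrac{1}{\delta-1}\bigr)\ge(s-1)/2$ (using $\delta\ge3$, which holds since $\delta=\xi m=\Omega(\ln n)$). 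Consequently the number of times a fixed $x_t$ is chosen as a contact is stochastically dominated by a sum of indicators whose conditional means sum to at most $2(\ln n+1)$, and a Chernoff-type tail bound for such sums (e.g.\ from~\cite{CL06:complex}) gives $\Pr[x_t\text{ is chosen}\ge\delta-1\text{ times}]\le n^{-2}$ once $\delta=\xi m\ge K_1(\xi)\ln n$ with $K_1(\xi)$ large enough. A union bound over $t\le n$ then gives $\Pr[\mathcal E^c]=O(n^{-1})=o(1)$, and on $\mathcal E$ we get $\mathrm{diam}(G_n^{\mathrm S})\le\mathrm{diam}(T')=O(\ln n)$.

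The only substantive work is in part~2, and the delicate point is the control of the contact-selection pool: a priori its composition is history-dependent and could be biased toward ``deep'' vertices (for instance the root being used up early), which is exactly the phenomenon the hypothesis $m=\Omega(\ln n)$ rules out. The deterministic lower bound on the pool size is what breaks the apparent circularity between ``pool size'' and ``number of drop-outs''; given that, the fact that $\delta=\Theta(\ln n)$ while each vertex is chosen only $O(\ln n)$ times \emph{in expectation} leaves enough slack for the Chernoff-and-union-bound argument to close. One should also be a little careful to invoke a concentration inequality valid for sums of indicators with bounded \emph{conditional} (rather than independent) means.
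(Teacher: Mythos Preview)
Your proof is correct. Part~1 matches the paper exactly. For part~2, both you and the paper aim to show that the flexible-edge tree coincides with a uniform recursive tree with high probability when $\delta\ge K_1(\xi)\ln n$, but the routes differ. The paper simply invokes the classical fact (stated as a lemma, citing the literature) that the \emph{maximum degree} of a uniform recursive tree on $n$ vertices is $\Theta(\ln n)$ with high probability; since the flexible tree agrees with the URT until some vertex reaches degree $\delta$, taking $\delta$ above this maximum degree makes the two processes coincide on the high-probability event. Your argument is more self-contained: you first establish the deterministic pool-size lower bound $(s-1)/2$, then bound the contact count of each vertex via a conditional Chernoff bound and a union bound. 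In effect you are reproving the URT max-degree estimate from scratch (and in a slightly more general setting, since you work directly inside the self-loop process rather than via a coupling). The paper's route is shorter because it offloads the work to a citation; your route is more explicit and avoids the (easy but unstated) coupling step, at the cost of some extra calculation. Either way the constant $K_1(\xi)$ ends up being of order $1/\xi$ times the implicit constant in the max-degree bound.
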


By the geometric structure of the models, it is natural to think of
that a group of vertices close to each other behaves like a good
community. We will make this intuition rigorous by considering the
\textit{$R$-neighborhood $C_R(v)$} of a vertex $v$, which is the set
of all vertices within distance at most $R$ from $v$ in $G_n$ and
show that for some appropriate $r$ and $R$, $C_R(v)$ is a good
community for every $v$.  We give the following result:

\begin{theorem}(Small-Community Phenomenon)
\label{thm:community} If $r=r_0$ and $m\geq K_2(\xi)\ln n$, where
$K_2(\xi)$ is some constant depending on $\xi$, both $G_n^\mathrm H$ and
$G_n^\mathrm S$ have the small-community phenomenon, i.e., in each model,
with high probability, for every node $v\in V_n$, there exists some
$(\alpha,\beta,\gamma)$-community containing $v$, where
$\alpha,\beta,\gamma$ are some constants independent of $n$.
\end{theorem}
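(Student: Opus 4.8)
The plan is to prove Theorem~\ref{thm:community} by showing that for an appropriate radius $R=R(n)$, the $R$-neighborhood $C_R(v)$ of every vertex $v$ is simultaneously (i) of polylogarithmic size, (ii) connected, and (iii) of small conductance, so that it witnesses the $(\alpha,\beta,\gamma)$-community property. Since both $G_n^\mathrm H$ and $G_n^\mathrm S$ contain the local geometric graph of the base model as a subgraph (with the global/flexible edges added on top), I would first carry out the argument for the base model's local graph and then bound the additional contribution of the $O(n)$ global edges. The key geometric fact to exploit is that $r=r_0=n^{-1/2}(\ln n)^{c_0}$, so a spherical cap $B_r(x)$ has area $A_r=\Theta(r^2)=\Theta((\ln n)^{2c_0}/n)$, and hence in expectation contains $\Theta((\ln n)^{2c_0})$ of the $n$ vertices; choosing $R$ to be a small constant multiple of $r$ keeps $|C_R(v)|$ polylogarithmic.

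The main steps would be as follows. First, I would establish concentration for $|C_R(v)|$: the number of vertices landing in $B_R(v)$ is a sum of independent indicators, so a Chernoff bound gives $|C_R(v)| = \Theta(nA_R) = O((\ln n)^{2c_0})$ with probability $1-o(1/n)$, uniformly over all $v$ via a union bound over an $\varepsilon$-net of $S$. This yields the size requirement with $\gamma = 2c_0$ and also shows $|S|=\omega(1)$ since $nA_R\to\infty$. Second, for connectivity I would show that $C_R(v)$ induces a connected subgraph: take $R$ large enough relative to $r$ (say $R = r/4$) so that any two vertices in $B_R(v)$ are within distance $r$ of some common intermediate vertex; more carefully, one covers $B_R(v)$ by overlapping caps of radius $r/2$, argues each is nonempty w.h.p., and uses that a new vertex in such a cap connects (via its $m$ local contacts) to an existing vertex in the same cap — so all vertices in $B_R(v)$ lie in one component. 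This is where the hypothesis $m\geq K_2(\xi)\ln n$ is first used: it guarantees (again by Chernoff, since the $m$ contacts are chosen independently) that the new vertex actually acquires a neighbor inside each relevant sub-cap with probability $1-o(1/n^2)$, so the union bound over vertices and caps goes through. Third, for the conductance bound I would estimate $|e(C_R(v),\overline{C_R(v)})|$: an edge leaves $C_R(v)$ only if one endpoint is in the annulus $B_{R+r}(v)\setminus B_{R-r}(v)$ (local edges can't span distance $>r$), whose area is $O(rR)=O(r^2)$, so the number of such vertices, and hence boundary local-edges, is $O(m\cdot nr^2) = O(m(\ln n)^{2c_0})$; meanwhile $\vol(C_R(v)) = \Theta(m|C_R(v)|) = \Theta(m\, nR^2)$, so the local conductance is $O(r^2/R^2) = O(1)$, and by taking $R$ a sufficiently large constant multiple of $r$ this ratio is made smaller than any prescribed constant, in particular at most $\alpha/|C_R(v)|^{\beta}$ for suitable constants once we note $|C_R(v)|^\beta$ is only polylogarithmic while we have a constant-factor gap to spend — actually one must be slightly careful here and either take $\beta$ small or absorb the polylog by enlarging $R$ by a polylog factor; I would choose $R = r\cdot(\ln n)^{c_1}$ so that $\vol(C_R(v))/|e(C_R(v),\overline{C_R(v)})| = \Omega((\ln n)^{2c_1})$ dominates $|C_R(v)|^\beta=O((\ln n)^{2(c_0+c_1)\beta})$ for small enough $\beta$, keeping $|C_R(v)|=O((\ln n)^{\gamma})$ with $\gamma=2(c_0+c_1)$.

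Fourth, I would handle the extra edges. In $G_n^\mathrm H$ each vertex has exactly one long-edge, so the long-edges contribute at most $|C_R(v)|$ to the boundary and at most $2|C_R(v)|$ to the volume; since the volume is already $\Theta(m|C_R(v)|)$ with $m=\omega(\ln n)$ (in particular $m\gg 1$), these lower-order terms do not affect the conductance estimate. In $G_n^\mathrm S$ the flexible self-loops do not count toward $\deg_t$ used in the attachment probabilities, and the rewired flexible edges again number one per vertex, so the same bookkeeping applies; I would just note that the base-model local graph underlying $G_n^\mathrm S$ is literally the base model, so the geometric estimates transfer verbatim. Finally I would assemble the three properties: $C_R(v)$ is connected, has size $O((\ln n)^\gamma)=\omega(1)$, and satisfies $\Phi(C_R(v))\le \alpha/|C_R(v)|^\beta$, hence is a strong $(\alpha,\beta,\gamma)$-community, and the union bound over $v\in V_n$ shows this holds for every $v$ with high probability.

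The step I expect to be the main obstacle is the \emph{uniform} connectivity of $C_R(v)$ together with the conductance bound: one needs the annulus around $B_R(v)$ to be thin relative to $R$ (so few boundary edges) yet every sub-cap of $B_R(v)$ must be nonempty and internally linked (so connectivity holds), and these pull $R$ in opposite directions — large $R$ helps conductance but makes $|C_R(v)|$ bigger, small $R$ risks empty sub-caps. Making all the Chernoff estimates hold simultaneously over a net of centers and over all $O((\ln n)^{O(1)})$ sub-caps, with failure probability $o(1/n)$, is exactly where the quantitative hypothesis $m\geq K_2(\xi)\ln n$ and the choice $c_0=c_0(\xi)$ large are needed, and pinning down $K_2(\xi)$ and $c_0(\xi)$ is the technical heart of the proof; the dependence on $\xi$ enters through the attachment probabilities, since the denominator $\sum_{w\in B_r(x_{t+1})\cap V_t}(\deg_t(w)+\delta)$ must be controlled from above and below, which requires the degree concentration result for the base model proved earlier in the paper.
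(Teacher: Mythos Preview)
Your overall strategy coincides with the paper's: pick $R$ equal to $r_0$ times a polylogarithmic factor (the paper takes $R_0=n^{-1/2}(\ln n)^{2c_0}=r_0(\ln n)^{c_0}$), bound $|C_R(v)|$ by Chernoff, prove the induced subgraph is connected, and bound the conductance by splitting boundary edges into local ones (which must straddle the annulus of width $r$ around $\partial B_R(v)$) and long ones. A small arithmetic slip: the annulus has area $\Theta(rR)$, so the local boundary count is $\Theta(m\cdot nrR)$ and the local conductance is $\Theta(r/R)$, not $r^2/R^2$; with $R=r(\ln n)^{c_1}$ you get $\Phi_{\mathrm{local}}=O((\ln n)^{-c_1})$, which agrees with the paper's numbers when $c_1=c_0$.

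There is, however, a genuine gap in your treatment of the long edges. You assert that ``each vertex has exactly one long-edge, so the long-edges contribute at most $|C_R(v)|$ to the boundary.'' This is not correct: each vertex $x_{t+1}$ \emph{creates} exactly one outgoing long edge, but it may be the target of long edges from many later vertices, so its long-degree can be as large as $\Theta(\ln n)$ (the maximum degree in a uniform recursive tree). The number of long edges leaving $C_R(v)$ is bounded by the \emph{sum} of long-degrees inside $C_R(v)$, and the naive bound $|C_R(v)|\cdot\Theta(\ln n)$ only yields a long-edge conductance of order $\Theta(\ln n)/m=\Theta(1)$, which does not decay. The paper handles this with a separate submartingale concentration lemma showing that the sum of long-degrees in $B_{R_0}(v)$ is $O(A_{R_0}n)=O(|C_{R_0}(v)|)$ with high probability; only with this sharper bound does the long-edge contribution become $O(1/m)=O((\ln n)^{-1})$, and in fact this is the \emph{dominant} term in the final conductance estimate, fixing $\beta=1/(4c_0)$ and $\gamma=4c_0$. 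Your argument needs this ingredient.

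One further point to tighten: your connectivity sketch (``a new vertex in a sub-cap connects to an existing vertex in the same cap'') shows only that no vertex is isolated within its cap, not that the cap forms a single component. The paper's argument tracks the number of connected components of $C_{r/2}(v)$ as vertices arrive and shows that, because $m\ge K_2(\xi)\ln n$, each new arrival merges at least two components with probability bounded away from zero, driving the component count to $1$ within $O(\ln n)$ arrivals; connectivity of $C_{R_0}(v)$ then follows by chaining overlapping $r/2$-caps. You correctly flagged this step as the delicate one, but the mechanism you describe is not yet sufficient.
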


A simple corollary of the above theorem is that the base model $G_n$ also has
the small-community phenomenon, which indicates that the community structure is mainly determined by the geometric structure of our model and that the effect of long edges is little for the reason that every new node can establish $m\gg\ln n$ local edges while only $1$ long edges.

The power law degree distribution stems from the preferential
attachment scheme used in our base model, for which we have:

\begin{theorem}(Degree Distribution of the Base Model)
\label{thm:basedeg}
In the base model, if $r\geq r_0$, $m=O(\ln^2 n)$ and $\delta=m\xi$ for any constant
$\xi>0$, there exist some constants $C_k$ and $\mu$, such that for
all $k=k(n)\geq m$,
\begin{eqnarray}
\E[d_k(t)]=C_k\frac{n}{k^{3+\xi}}+O(\frac{n}{(nr^2)^\mu})\enspace,
\end{eqnarray}
where $d_k(t)$ denotes the number of vertices with degree $k$ in the
base model $G_t$, $C_k=C_k(m,\xi)$ tends to a limit
$C_{\infty}(m,\delta)$ which only depends on $m,\delta$ as $k\to
\infty$, and $\mu$ is some constant depending on $\xi$ and strictly less than $1$.
\end{theorem}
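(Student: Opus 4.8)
The plan is to set up a recursion for $\E[d_k(t+1)]$ conditioned on $G_t$, solve the resulting (approximate) difference equation in $k$ to extract the $k^{-(3+\xi)}$ tail, and control the error terms coming from the geometric restriction. First I would write down, for a new vertex $x_{t+1}$ landing at a uniformly random point of $S$, the probability that a fixed degree-$k$ vertex $v$ gains a new edge. Conditioning on $x_{t+1}\in B_r(v)$ (an event of probability $A_r = \Theta(r^2)$), the chance that $v$ is selected as one of the $m$ contacts is governed by Eq.~\eqref{eqn:prob}, whose denominator $\sum_{w\in B_r(x_{t+1})\cap V_t}(\deg_t(w)+\delta)$ should concentrate around its typical value. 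The key preliminary step — and I expect this to be the main obstacle — is to show that this local volume $\sum_{w\in B_r(x_{t+1})\cap V_t}(\deg_t(w)+\delta)$ is, with overwhelming probability, $(1+o(1))\cdot 2(m+\delta/2) A_r t = (1+o(1))(2+\xi)m A_r t$, with a multiplicative error that is $O((nr^2)^{-\mu})$-ish; this is exactly where the assumption $r\geq r_0 = n^{-1/2}(\ln n)^{c_0}$ and the largeness of $c_0=c_0(\xi)$ enter, since $nr^2 \geq (\ln n)^{2c_0}$ gives enough room for a union bound over all cells and all times. This is presumably where the \emph{alternating concentration analysis} advertised in the abstract does its work: one bootstraps a crude bound on the local degree sum into a sharp one by alternately applying the super- and sub-martingale inequalities of~\cite{CL06:complex}.

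Granting that concentration, the per-step probability that a degree-$k$ vertex increases its degree is
$$
p_k(t) \;=\; \frac{m(k+\delta)}{(2+\xi)m\,t}\bigl(1+O((nr^2)^{-\mu})\bigr)\;=\;\frac{k+\delta}{(2+\xi)t}\bigl(1+O((nr^2)^{-\mu})\bigr),
$$
since the $A_r$ factors from "$x_{t+1}$ falls near $v$" and from the denominator cancel. (One must also check the events "$v$ receives $\geq 2$ new edges in one step" and "$B_r(x_{t+1})\cap V_t=\emptyset$" are lower-order, using $k=O(m+\text{stuff})$ bookkeeping and $nr^2\to\infty$.) This yields the standard master recursion
$$
\E[d_k(t+1)] \;=\; \E[d_k(t)]\Bigl(1-\tfrac{k+\delta}{(2+\xi)t}\Bigr) + \E[d_{k-1}(t)]\,\tfrac{k-1+\delta}{(2+\xi)t} + \mathbf{1}[k=m] + \text{error},
$$
valid for $k\geq m$, with the additive error per step of size $O(t\cdot (nr^2)^{-\mu}\cdot(\text{something}))$ that sums to $O(n (nr^2)^{-\mu})$.

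To finish I would look for a stationary solution of the form $\E[d_k(t)] = c_k t$ (plus a transient decaying in $t$); substituting gives the linear recurrence $c_k\bigl(1+\tfrac{k+\delta}{2+\xi}\bigr) = c_{k-1}\tfrac{k-1+\delta}{2+\xi}$ for $k>m$ with boundary term at $k=m$, whose solution is a ratio of Gamma functions,
$$
c_k \;=\; c_m \prod_{j=m+1}^{k} \frac{j-1+\delta}{\,j+\delta+2+\xi\,} \;=\; c_m\,\frac{\Gamma(k+\delta)\,\Gamma(m+\delta+3+\xi)}{\Gamma(m+\delta)\,\Gamma(k+\delta+3+\xi)},
$$
and Stirling's approximation gives $c_k \sim C_\infty(m,\delta)\,k^{-(3+\xi)}$, with $C_k := c_k k^{3+\xi}$ converging to the limit $C_\infty$. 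Setting $n=t$ and folding the accumulated error into the stated $O(n/(nr^2)^\mu)$ term yields the theorem; one checks $\mu<1$ is forced by the concentration step (the martingale bounds lose a bit), and $m=O(\ln^2 n)$ is used only to keep the relative errors and the boundary contribution under control. The delicate points, in order of difficulty, are: (i) the sharp local-volume concentration uniformly over space and time (the alternating-martingale argument); (ii) verifying the error terms in the recursion genuinely telescope to $O(n/(nr^2)^\mu)$ rather than something larger; and (iii) the Gamma-function asymptotics, which are routine.
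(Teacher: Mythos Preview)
Your proposal is correct and matches the paper's approach essentially line for line: Lemma~\ref{lem:exabound} is exactly the local-volume concentration you flag as the crux, proved by the alternating sub/super-martingale bootstrap you anticipate, and the master recursion, its Gamma-function stationary solution $f_k$, and the error control are handled just as you outline. The only minor additions in the paper are an explicit induction on $|\E[d_k(t)]-f_k t|$ (rather than ``telescoping'') and a separate trivial bound $\E[d_k(t)]\le 2mt/k$ for $k$ beyond a threshold $k_0(t)=(nr^2)^{c_1/2c_0-c_2/4c_0}$, where the parallel-edge term $\eta_k$ would otherwise be too large---this threshold argument is where the hypothesis $m=O(\ln^2 n)$ is actually invoked.
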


Theorem~\ref{thm:basedeg} has already significantly strengthened the results in both
van den Esker~\cite{Esk08:geometric} and Jordan~\cite{Jor10:gpa}. The proof of this theorem
requires the new technique of recursively bounding the concentration
inequalities as we will build in Section~\ref{sec:power}.

Based on Theorem~\ref{thm:basedeg}, we are able to show that in our generalized
models, the networks satisfy a nice power law degree distribution.

\begin{theorem}(Power Law Degree Distribution)
\label{thm:power} For $r\geq r_0$ and $m=O(\ln^2 n)$, the expected degree
sequences of \textbf{the local graph} of the hybrid model $G_n^H$
and \textbf{the whole graph} of the self-loop model $G_n^S$ both
follow a power law distribution with exponent $3+\xi$. More
specifically, there exist some constants $C_k^\mathrm H$, $C_k^\mathrm S$ and $\mu$, such that
for all $k=k(n)\geq m$,
\begin{enumerate}
\item in the hybrid model, $\E[d_k(n)]=C_k^\mathrm H\frac{n}{k^{3+\xi}}+O(\frac{n}{(nr^2)^\mu})$, where $d_k(t)$ denotes the number of vertices with local-degree $k$ in $G_t^\mathrm H$;
\item in the self-loop model, $\E[d_k(n)]=C_k^\mathrm S\frac{n}{k^{3+\xi}}+O(\frac{n}{(nr^2)^\mu})$, where $d_k(t)$ denotes the number of vertices with total degree $k$ in $G_t^\mathrm S$.
\end{enumerate}
In the above statements, both $C_k^\mathrm H$ and $C_k^\mathrm S$ tend to some limits that depend on $m,\delta$ only as $k\to
\infty$, and $\mu$ is some constant depending on $\xi$ and strictly less than $1$.
\end{theorem}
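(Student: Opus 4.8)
The plan is to reduce both parts to Theorem~\ref{thm:basedeg} by observing that, in each generalized model, the degree sequence in question is driven by a subgraph process whose law is \emph{exactly} that of the base model. For the hybrid model (part 1), the first step is to record, along the evolution of $G_t^\mathrm H$, only the local-edges together with the (uniform) vertex positions on $S$, and to check that this restricted process obeys verbatim the rules of the base model: the start graph $G_1^\mathrm H$ equals $G_1$ with all self-loops declared local; at step $t+1$ the $m$ local-contacts of $x_{t+1}$ are selected by the preferential-attachment rule~(\ref{eqn:prob}) with $\deg_t$ read as local-degree and $\delta=\xi m$ the fitness; and the extra long-contact $z$ is chosen independently and never affects local-degrees. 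Hence the local-degree process of $G_t^\mathrm H$ is equidistributed with the degree process of $G_t$, so $d_k(n)$ (local-degree) here has the same law as $d_k(n)$ in the base model, and part 1 follows immediately from Theorem~\ref{thm:basedeg} with $C_k^\mathrm H=C_k$.

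For the self-loop model (part 2) I would split $G_t^\mathrm S$ into its non-flexible and flexible edges and establish two facts. First, a step-by-step coupling shows that the non-flexible subgraph of $G_t^\mathrm S$, recorded with the vertex positions, has the law of the base model $G_t$: the $m$ non-flexible contacts of $x_{t+1}$ are drawn by~(\ref{eqn:prob}) with $\deg_t$ equal to the non-flexible degree and $\delta=\xi m$ the fitness, which (maintaining the coupling inductively) is the same weight the base model uses; the isolated-vertex rule, if invoked, mirrors that of the base model; and the only other operation at a step --- deleting one flexible self-loop from each of $x_{t+1}$ and $z$ and adding the flexible edge $(x_{t+1},z)$ --- neither creates nor destroys any non-flexible edge. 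Second, and more crucially, there is a conservation law: every vertex carries flexible-degree exactly $\delta$ at all times. Indeed $x_s$ is born with $\delta$ flexible self-loops (and $x_1$ with $\delta$), and the rewiring operation replaces, at each of its two endpoints, one flexible self-loop (degree $1$) by one flexible edge (degree $1$), hence preserves every vertex's flexible-degree; the claim follows by induction on $t$. Consequently the \emph{total} degree of any vertex in $G_t^\mathrm S$ equals its non-flexible degree plus $\delta$.

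Combining the two facts, $d_k(n)$ in the self-loop model (counting total degree $k$) is equidistributed with $d_{k-\delta}(n)$ in the base model; in particular it vanishes for $k<m+\delta=m(1+\xi)$, since every non-flexible degree is at least $m$. Applying Theorem~\ref{thm:basedeg} at index $k-\delta$ (whose error term $O(n/(nr^2)^\mu)$ is uniform in the index, so the shift is harmless) gives, for $k\ge m(1+\xi)$,
\[
\E[d_k(n)] = C_{k-\delta}\,\frac{n}{(k-\delta)^{3+\xi}} + O\!\left(\frac{n}{(nr^2)^\mu}\right)
= C_k^\mathrm S\,\frac{n}{k^{3+\xi}} + O\!\left(\frac{n}{(nr^2)^\mu}\right),
\]
where we set $C_k^\mathrm S:=C_{k-\delta}\bigl(k/(k-\delta)\bigr)^{3+\xi}$ for $k\ge m(1+\xi)$ and $C_k^\mathrm S:=0$ otherwise (for $m\le k<m(1+\xi)$ both sides are $0$, absorbed by the error term). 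Since $C_{k-\delta}\to C_\infty(m,\delta)$ and $\bigl(k/(k-\delta)\bigr)^{3+\xi}\to 1$ as $k\to\infty$, the constants $C_k^\mathrm S$ converge to a limit depending only on $m,\delta$, as does $C_k^\mathrm H=C_k$; this proves both claims with exponent $3+\xi$.

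\emph{Where the work lies.} Essentially all of the analytic difficulty --- the alternating sub-/super-martingale concentration machinery and the delicate constraints yielding $\mu<1$ --- is already packed into Theorem~\ref{thm:basedeg}. What remains for this theorem is bookkeeping: a careful, step-by-step verification (including the isolated-vertex corner case) that the local subgraph of $G_t^\mathrm H$ and the non-flexible subgraph of $G_t^\mathrm S$ are literal copies of the base process, plus the conservation argument for flexible-degree. The one mildly subtle point is the index shift by $\delta$ in the self-loop model: it forces the small renormalization of $C_k^\mathrm S$ above, and one must check that this neither disturbs the power-law exponent nor the limiting constant --- which it does not, because $\delta=\xi m=O(\ln^2 n)$ is fixed as $k\to\infty$.
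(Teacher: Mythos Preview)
Your approach is correct and matches the paper's. For the hybrid model both you and the paper observe that the local-edge process is literally the base model; for the self-loop model the paper states the same key identity you derive --- that total degree equals non-flexible degree plus $\delta$ --- and then rewrites the recurrence for $d_{k+\delta}(t)$, which is exactly the base-model recurrence, while you equivalently invoke Theorem~\ref{thm:basedeg} at the shifted index $k-\delta$; your explicit verification of the flexible-degree conservation law and the renormalization $C_k^\mathrm S=C_{k-\delta}\bigl(k/(k-\delta)\bigr)^{3+\xi}$ are, if anything, more careful than the paper's sketch.
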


From the above theorems, we know that when $r=r_0=n^{-1/2}(\ln
n)^{c_0}$, the two generalized models simultaneously have all the
three properties to some extent (as in the hybrid model, only the
local part has the power law degree distribution). What about the
cases when $r$ is too large or too small? We give some evidence that
at least one of the three properties disappears in such cases. In
particular, when $r$ is large, we have the following new phenomenon.

\begin{theorem}(Large Community and Small Expander)~\label{thm:parameter}
In the base model $G_n$, let $r=n^{-1/2+\epsilon}$, where $\epsilon>0$ and $m\geq K\ln n$,
for some sufficiently large constant $K$.
\begin{enumerate}
 \item~\label{thm:largecom} If $R=n^{-1/2+\rho}$, for any $\rho>\epsilon$, then
$|C_R(v)|=\Theta(n^{2\rho})$ and $\mathrm
\Phi(C_R(v))=O(\frac{1}{n^{\rho-\epsilon}})$, with high probability.
 \item~\label{thm:smallexp} With high probability, for all $R=o(r)$, $\mathrm \Phi(C_R(v))=\Omega(1)$.
\end{enumerate}
\end{theorem}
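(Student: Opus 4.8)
I would handle the two parts separately; Part~\ref{thm:largecom} is a counting estimate, while Part~\ref{thm:smallexp} needs the degree-concentration machinery behind Theorem~\ref{thm:basedeg}.

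\emph{Part~\ref{thm:largecom}.} The size bound is immediate from the geometry: conditionally on the position of $v$, $|C_R(v)|=|B_R(v)\cap V_n|$ is, up to $\pm1$, a binomial variable with mean $\Theta(nA_R)=\Theta(nR^2)=\Theta(n^{2\rho})$, so a Chernoff bound together with a union bound over the $n$ choices of $v$ gives $|C_R(v)|=\Theta(n^{2\rho})$ with high probability, simultaneously for all $v$ (here $n^{2\rho}\gg\ln n$); the same estimate shows that each of the two annuli $B_R(v)\setminus B_{R-r}(v)$ and $B_{R+r}(v)\setminus B_R(v)$ contains $\Theta(nRr)=\Theta(n^{\rho+\epsilon})$ vertices. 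For the conductance I would use that every vertex has degree at least $m$ (a new vertex always contributes $m$ edges, or $2m$ self-loops, to its own degree), so $\vol(C_R(v))\ge m|C_R(v)|=\Omega(mn^{2\rho})$ and $\vol(\overline{C_R(v)})\ge m(n-|C_R(v)|)=\Omega(mn)$, whence $\min\{\vol(C_R(v)),\vol(\overline{C_R(v)})\}=\Omega(mn^{2\rho})$. To bound $|e(C_R(v),\overline{C_R(v)})|$ from above, observe that every edge is created when a vertex $x_b$ selects a contact $x_a\in B_r(x_b)\cap V_{b-1}$ (so $a<b$), and such an edge crosses the boundary of $B_R(v)$ only if exactly one of $x_a,x_b$ lies in $B_R(v)$; since $\|x_a-x_b\|\le r=o(R)$ this forces the later endpoint $x_b$ into one of the two annuli above, and $x_b$ has at most $m$ out-edges. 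Hence the number of crossing edges is $O(mn^{\rho+\epsilon})$, and $\Phi(C_R(v))=O\!\left(mn^{\rho+\epsilon}/(mn^{2\rho})\right)=O(n^{-(\rho-\epsilon)})$, as claimed.

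\emph{Part~\ref{thm:smallexp}.} Fix $v$ and a radius $R=o(r)$ with $|C_R(v)|=\omega(1)$ (otherwise $C_R(v)$ is not a community and there is nothing to prove); write $S:=C_R(v)$. It suffices to show $|e(S,\overline S)|\ge c\,\vol(S)$ for a constant $c>0$, since then $\Phi(S)=|e(S,\overline S)|/\min\{\vol(S),\vol(\overline S)\}\ge|e(S,\overline S)|/\vol(S)\ge c$. Since any two points of $B_R(v)$ are within $2R<r$ of one another, at most one vertex of $S$ can be a ``self-loop vertex'' (one whose $r$-cap was empty at birth), so the self-loop volume inside $S$ is $O(m)=o(\vol(S))$ and may be ignored. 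For every other $u\in S$, each edge incident to $u$ joins $u$ to a vertex lying in the cap $B_r(x_u)$, and it is internal to $S$ precisely when that vertex also lies in $B_R(v)$; since $B_R(v)\subset B_r(x_u)$ and $A_R/A_r=\Theta(R^2/r^2)=o(1)$, internal edges should be rare. Quantitatively, when $x_b\in S$ picks a contact it lands in $B_R(v)$ with probability $\mathrm{mass}_{b-1}(B_R(v))/\mathrm{mass}_{b-1}(B_r(x_b))$, where $\mathrm{mass}_t(A)=\sum_{w\in A\cap V_t}(\deg_t(w)+\delta)$; using $\delta=\xi m>0$, the denominator is $\Omega(m\,br^2)$, while the numerator is $O(m\,bR^2)$ unless $B_R(v)$ contains an atypically heavy vertex, in which case that vertex's degree dominates both. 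So away from heavy vertices the probability that a contact lands in $B_R(v)$ is $O(R^2/r^2+\ln n/(br^2))=o(1)$ for all $x_b$ born late enough that $br^2=\omega(\ln n)$ --- and all but an $o(|S|)$ fraction of the vertices of $S$ are born that late, since each of the $n$ vertices lands in $S$ with the same probability $A_R$ regardless of its birth time. The contacts not covered by this bound are those equal to some atypically heavy vertex $u^\ast\in S$; their number equals the number of in-edges of $u^\ast$ coming from $S$, and this is $o(\deg^{\mathrm{in}}(u^\ast))$ because the later vertices picking $u^\ast$ are spread over the whole cap $B_r(x_{u^\ast})$ roughly in proportion to area (a heavy vertex dominates the selection weights uniformly throughout its $r$-neighbourhood), so only an $O(R^2/r^2)$ fraction of them lie in $B_R(v)$. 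Summing over $u\in S$, the expected number of internal edges of $S$ is $o(\vol(S))$; a concentration argument --- conditioning on the geometry, then using the near-independence of the selection steps as in the proof of Theorem~\ref{thm:basedeg}, together with $m\ge K\ln n$ --- upgrades this to a high-probability statement, so $|e(S,\overline S)|\ge(1-o(1))\vol(S)$ and $\Phi(S)=\Omega(1)$. (A union bound over $v$ is legitimate once $nR^2=\omega(\ln n)$.)

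\emph{Where the real work lies.} The one non-routine ingredient is uniform control, over all $\Theta(n)$ relevant cap centres $y$ and all times $t$, of $\mathrm{mass}_t(B_r(y))$ --- equivalently, the statements that no geometric region carries an anomalously large share of the total degree and that a heavy vertex, when it occurs, is the unique such vertex in its $r$-neighbourhood and dominates the preferential-attachment weights there uniformly. This is exactly what the degree-distribution estimates of Theorem~\ref{thm:basedeg} (and the attendant second-moment and geometric-independence bounds) are needed for, and it is the place where $\delta=\xi m$ being a constant fraction of $m$ is essential: the additive $+\delta$ on every vertex keeps each $\mathrm{mass}_t(B_r(y))$ of the expected order $m\,A_r\,t$ even in sparse caps, which is what drives every $o(1)$ bound above.
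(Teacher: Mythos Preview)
Your Part~\ref{thm:largecom} is fine and is essentially the argument one would give; the paper in fact defers this part to~\cite{LP11:smallcommunity}, so there is nothing to compare against.

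For Part~\ref{thm:smallexp} your route is genuinely different from the paper's and, as written, has a gap. You try to show that the number of \emph{internal} edges of $S=C_R(v)$ is $o(\vol(S))$, and for that you need an upper bound on $\mathrm{mass}_t(B_R(v))$ for caps of radius $R=o(r)$. No such bound is available: Lemma~\ref{lem:exabound} (and the whole alternating-concentration machinery behind Theorem~\ref{thm:basedeg}) controls $T_t(u)=\mathrm{mass}_t(B_r(u))$ only for caps of radius exactly~$r$; the recurrence it is based on uses that the new vertex both lands in and selects from an $r$-cap, so it does not transfer to smaller radii. Your ``heavy vertex'' workaround is not a proof: nothing prevents several moderately heavy vertices from coexisting in $B_R(v)$, and the claim that the in-neighbours of a heavy $u^\ast$ are ``spread over $B_r(u^\ast)$ roughly in proportion to area'' is itself a uniform statement about $T_t(x_b)^{-1}$ that you have not established. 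You also misidentify where the work lies: Theorem~\ref{thm:basedeg} says nothing about mass in sub-$r$ caps.

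The paper avoids all of this by lower-bounding the \emph{crossing} edges directly. Set $C'=C_{r-R}(v)$; then every $u\in C$ and $u'\in C'$ satisfy $\|u-u'\|\le R+(r-R)=r$, so $C\subset B_r(x_j)$ for every $x_j\in C'$. Splitting by birth time at $n/2$, for each late-born $x_j\in C'_2$ and each $i\le m$ one has
\[
\Pr\bigl[y_i^{j}\in C_1\bigr]\;\ge\;\frac{(m+\delta)|C_1|}{T_{j-1}(x_j)}\;\ge\;\frac{|C_1|}{c\,A_r n},
\]
where the last inequality uses only the \emph{rough upper bound} $T_t(u)\le O((2+\xi)mA_rt)$ of Lemma~\ref{lem:roughbound}---no alternating concentration, no degree distribution. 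Since $|C'_1|,|C'_2|\asymp A_r n$ w.h.p., this gives $\E|e(C,C'\setminus C)|\gtrsim m|C|$; a binomial tail bound and a union bound over sizes $k=|C|$ via $\binom{n}{k}e^{-\Omega(mk)}$ (this is where $m\ge K\ln n$ enters, and it handles all $R=o(r)$ at once, including those with $nR^2=O(\ln n)$) finish the proof. In short, the paper needs only an upper bound on one $r$-cap mass, which is elementary, whereas your sketch needs an upper bound on $R$-cap masses for all $R=o(r)$, which is neither proved nor easy.
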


Theorem~\ref{thm:parameter} indicates that when $r=n^{-1/2+\epsilon}$, there exists
some large community for every node, which may not belong to any
small community for the reason that the most natural candidate,
i.e., the small neighborhood is not a good community. We remark that
Theorem~\ref{thm:parameter} may imply a new phenomenon in networks. It would be
interesting to find some real world networks, in which there is a
large fraction of nodes each of which is contained in both a good
but large community and a small expander. We also note that the two generalized models have the same phenomenon for such a large $r$.

In the remaining sections of the paper, we are devoted to proving
our main results, Theorems~\ref{thm:diameter}, \ref{thm:community}, \ref{thm:basedeg}, \ref{thm:power}, and \ref{thm:parameter}. We will organize the
paper as follows. In Section~\ref{sec:tools}, we introduce some basic tools for
our proof, and basic properties of our network models. In Sections~\ref{sec:diameter}
and~\ref{sec:community}, we prove Theorems~\ref{thm:diameter} and \ref{thm:community}, respectively. In Section~\ref{sec:power}, we
prove Theorems~\ref{thm:basedeg} and~\ref{thm:power}. In Section~\ref{sec:parameter}, we prove Theorem~\ref{thm:parameter}. Finally in
Section~\ref{sec:conclu}, we discuss some further issues following the results in
this paper.

\section{Useful tools and basic facts}~\label{sec:tools}
Before proving the main results, we first give several basic facts
which will be useful in our proofs of the main results.

We will use the following form of the Chernoff bound (see eg.
Theorem 1.1 in~\cite{DP09:concentration}).
\begin{lemma}~\label{lem:chernoff}
If $X_1,\cdots, X_t$ are independently distributed in $[0,1]$ and
$X=\sum_{i=1}^tX_i$, then for $0<\zeta\leq 1$,
\begin{eqnarray}
\Pr[|X-\E[X]|\geq \zeta\E[X]]\leq 2e^{-\frac{\zeta^2\E[X]}{3}}.
\end{eqnarray}
\end{lemma}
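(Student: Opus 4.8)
The plan is to use the standard exponential-moment (Bernstein--Chernoff) method, handling the upper and lower tails separately and combining them by a union bound. Write $\mu=\E[X]=\sum_{i=1}^{t}\E[X_i]$.

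For the upper tail, fix $\lambda>0$ and apply Markov's inequality to $e^{\lambda X}$:
\[
\Pr[X\geq(1+\zeta)\mu]=\Pr\bigl[e^{\lambda X}\geq e^{\lambda(1+\zeta)\mu}\bigr]\leq e^{-\lambda(1+\zeta)\mu}\,\E\bigl[e^{\lambda X}\bigr].
\]
By independence $\E[e^{\lambda X}]=\prod_{i=1}^{t}\E[e^{\lambda X_i}]$. The key elementary step is the convexity bound $e^{\lambda x}\leq 1+(e^{\lambda}-1)x$, valid for $x\in[0,1]$ (the chord of the convex map $x\mapsto e^{\lambda x}$ over $[0,1]$ dominates it). Taking expectations and using $1+y\leq e^{y}$ gives $\E[e^{\lambda X_i}]\leq e^{(e^{\lambda}-1)\E[X_i]}$, hence $\E[e^{\lambda X}]\leq e^{(e^{\lambda}-1)\mu}$. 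Substituting and minimizing the resulting bound over $\lambda$ --- the optimum being $\lambda=\ln(1+\zeta)$ --- yields
\[
\Pr[X\geq(1+\zeta)\mu]\leq\left(\frac{e^{\zeta}}{(1+\zeta)^{1+\zeta}}\right)^{\mu}.
\]

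The lower tail is symmetric: apply Markov to $e^{-\lambda X}$ with $\lambda>0$, use $e^{-\lambda x}\leq 1+(e^{-\lambda}-1)x$ for $x\in[0,1]$, and optimize at $\lambda=\ln\frac{1}{1-\zeta}$ to get
\[
\Pr[X\leq(1-\zeta)\mu]\leq\left(\frac{e^{-\zeta}}{(1-\zeta)^{1-\zeta}}\right)^{\mu}.
\]
It then remains to verify the two scalar inequalities $\zeta-(1+\zeta)\ln(1+\zeta)\leq-\zeta^{2}/3$ for $0\leq\zeta\leq1$ and $-\zeta-(1-\zeta)\ln(1-\zeta)\leq-\zeta^{2}/2\leq-\zeta^{2}/3$ for $0\leq\zeta<1$. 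For the first, put $f(\zeta)=\zeta-(1+\zeta)\ln(1+\zeta)+\zeta^{2}/3$; then $f(0)=f'(0)=0$ and $f''(\zeta)=\frac{2}{3}-\frac{1}{1+\zeta}$, which is $\leq0$ on $[0,\frac12]$ and $\geq0$ on $[\frac12,1]$. Thus $f$ is concave on $[0,\frac12]$, so $f\leq0$ there, and convex on $[\frac12,1]$, so on that interval it attains its maximum at an endpoint; since $f(\frac12)<0$ and $f(1)<0$ by direct evaluation, $f\leq0$ on all of $[0,1]$. The second inequality is handled the same way (there the correction term has a single-signed second derivative, so it is even easier). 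Finally
\[
\Pr[|X-\mu|\geq\zeta\mu]\leq\Pr[X\geq(1+\zeta)\mu]+\Pr[X\leq(1-\zeta)\mu]\leq 2e^{-\zeta^{2}\mu/3}.
\]

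The only mildly delicate point --- the ``main obstacle'' if one insists on a fully self-contained argument --- is this last step: the functional inequality $e^{\zeta}/(1+\zeta)^{1+\zeta}\leq e^{-\zeta^{2}/3}$ is false for large $\zeta$, so one genuinely uses the hypothesis $\zeta\leq1$ together with the concavity/convexity split above; everything else is routine. Alternatively, one may simply quote Theorem~1.1 of \cite{DP09:concentration}, as the excerpt does.
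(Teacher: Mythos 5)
Your proof is correct and is exactly the standard exponential-moment argument behind the result the paper invokes: the paper gives no proof of this lemma at all, citing Theorem 1.1 of \cite{DP09:concentration}, so your self-contained derivation (MGF bound via $e^{\lambda x}\leq 1+(e^{\lambda}-1)x$ on $[0,1]$, optimization, and the calculus check of the two scalar inequalities, which I verified is sound) matches the intended source argument rather than deviating from it. The only negligible loose end is the endpoint $\zeta=1$ of the lower tail, where your scalar inequality was checked on $[0,1)$; it extends by continuity, or directly since $\Pr[X\leq 0]\leq e^{-\mu}\leq e^{-\mu/3}$.
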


The following submartingale concentration inequality will be used
extensively in our proofs (referred to Theorems 2.38 and 2.41
in~\cite{CL06:complex}).
\begin{lemma}\label{lem:submar}
Suppose that $\{X_0,\cdots, X_t\}$ is a sequence of random variables
associated with a filter $\{\mathcal{F}_0,\cdots,\mathcal{F}_t\}$
and $\mathcal{G}$ is some event on the probability space. If for
$1\leq i\leq t$,
\begin{eqnarray}
&&\E[X_i|\mathcal{F}_{i-1},\mathcal{G}]\leq X_{i-1},\nonumber\\
&&\Var[X_i|\mathcal{F}_{i-1},\mathcal{G}]\leq \sigma_i^2,\nonumber\\
&&X_i-\E[X_i|\mathcal{F}_{i-1},\mathcal{G}]\leq M,\nonumber
\end{eqnarray}
where $\sigma_i^2,M$ are non-negative constants. Then we have
\begin{eqnarray}
\Pr[X_t\geq X_0+\lambda]\leq
e^{-\frac{\lambda^2}{2\sum_{i=1}^t\sigma_i^2+M\lambda/3}}+\Pr[\neg\mathcal{G}].
\end{eqnarray}
\end{lemma}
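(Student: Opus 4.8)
\emph{Proof idea.} The plan is to reduce the assertion to the ordinary (exceptional-event-free) Bernstein/Freedman supermartingale inequality by passing to the conditional law $\mu(\cdot):=\Pr(\,\cdot\mid\mathcal{G})$, and then to run the standard exponential-moment computation under $\mu$. First I would dispose of the case $\Pr[\mathcal{G}]=0$, where $\Pr[\neg\mathcal{G}]=1$ and the claimed bound is vacuous; otherwise set $\mu$ as above. Since $\{X_t\ge X_0+\lambda\}\subseteq(\{X_t\ge X_0+\lambda\}\cap\mathcal{G})\cup\neg\mathcal{G}$ and $\Pr[A\cap\mathcal{G}]=\Pr[\mathcal{G}]\,\mu[A]\le\mu[A]$ for every event $A$, it suffices to bound $\mu[X_t\ge X_0+\lambda]$ by the stated exponential. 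The point of this reduction is that, reading $\E[\,\cdot\mid\mathcal{F}_{i-1},\mathcal{G}]$ as conditional expectation with respect to $\sigma(\mathcal{F}_{i-1}\cup\{\mathcal{G}\})$ evaluated on $\mathcal{G}$, the three hypotheses say precisely that under $\mu$, and $\mu$-almost surely, the adapted sequence $\{X_i\}$ satisfies $\E_\mu[X_i\mid\mathcal{F}_{i-1}]\le X_{i-1}$ (i.e.\ it is a supermartingale under $\mu$ in the usual convention), $\Var_\mu[X_i\mid\mathcal{F}_{i-1}]\le\sigma_i^2$, and $X_i-\E_\mu[X_i\mid\mathcal{F}_{i-1}]\le M$. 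Thus we are left with the classical statement, with $\mathcal{G}$ gone.

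For the classical statement I would use exponential moments. Put $Z_i:=X_i-X_{i-1}$ and let $\bar Z_i:=X_i-\E_\mu[X_i\mid\mathcal{F}_{i-1}]$ be the centered increment, so that $\E_\mu[\bar Z_i\mid\mathcal{F}_{i-1}]=0$, $\E_\mu[\bar Z_i^{2}\mid\mathcal{F}_{i-1}]=\Var_\mu[X_i\mid\mathcal{F}_{i-1}]\le\sigma_i^2$, $\bar Z_i\le M$, and (using $\E_\mu[X_i\mid\mathcal{F}_{i-1}]\le X_{i-1}$) $Z_i\le\bar Z_i$. Fix $0<\alpha<3/M$. From the monotonicity of $x\mapsto(e^x-1-x)/x^2$ together with the standard estimate $(e^u-1-u)/u^2\le\frac{1}{2(1-u/3)}$ on $[0,3)$ one gets $e^x\le 1+x+\frac{x^2}{2(1-\alpha M/3)}$ for every $x\le\alpha M$; applying this pointwise with $x=\alpha\bar Z_i$, taking conditional expectation, and using $1+s\le e^s$ yields
\[
\E_\mu[e^{\alpha Z_i}\mid\mathcal{F}_{i-1}]\le\E_\mu[e^{\alpha\bar Z_i}\mid\mathcal{F}_{i-1}]\le\exp\Big(\frac{\alpha^2\sigma_i^2}{2(1-\alpha M/3)}\Big),
\]
the first inequality because $Z_i\le\bar Z_i$ and $\alpha>0$; note that no lower bound on the increments is needed, since the relevant function is increasing on the whole line. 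Iterating the tower property for $i=t,t-1,\dots,1$ gives $\E_\mu[e^{\alpha(X_t-X_0)}]\le\exp\big(\frac{\alpha^2}{2(1-\alpha M/3)}\sum_{i=1}^{t}\sigma_i^2\big)$, and Markov's inequality then gives $\mu[X_t\ge X_0+\lambda]\le\exp\big(-\alpha\lambda+\frac{\alpha^2}{2(1-\alpha M/3)}\sum_{i=1}^{t}\sigma_i^2\big)$. Choosing $\alpha=\lambda/(\sum_{i=1}^{t}\sigma_i^2+M\lambda/3)$, which lies in $(0,3/M)$, collapses the exponent to $-\lambda^2/(2(\sum_{i=1}^{t}\sigma_i^2+M\lambda/3))$; together with the first paragraph this is the assertion.

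The only step requiring genuine care is the reduction in the first paragraph: one must check that conditioning on the (possibly non-adapted) event $\mathcal{G}$ leaves $\{\mathcal{F}_i\}$ a filtration, keeps the tower property valid under $\mu$, and transports the three conditional hypotheses to honest $\mu$-almost-sure estimates, so that the unconditional Bernstein/Freedman computation applies verbatim; everything downstream is routine. The resulting bound coincides with Theorems~2.38 and~2.41 of~\cite{CL06:complex}.
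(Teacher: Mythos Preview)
The paper does not prove this lemma at all: it simply states the inequality and refers the reader to Theorems~2.38 and~2.41 of~\cite{CL06:complex}. So there is no ``paper's own proof'' to compare against; you have supplied an argument where the paper gives only a citation.

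Your approach---splitting off $\Pr[\neg\mathcal{G}]$, passing to the conditional measure $\mu=\Pr(\cdot\mid\mathcal{G})$, and then running the standard Bernstein/Freedman exponential-moment computation for a supermartingale with one-sided bounded increments and bounded conditional variance---is exactly how the cited Chung--Lu theorems are proved, and the calculus you sketch (the $(e^x-1-x)/x^2$ bound, the tower iteration, the optimizing choice of $\alpha$) is correct. You also correctly flag the one genuinely delicate point: identifying $\E[\,\cdot\mid\mathcal{F}_{i-1},\mathcal{G}]$ with $\E_\mu[\,\cdot\mid\mathcal{F}_{i-1}]$ when $\mathcal{G}$ is not $\mathcal{F}_{i-1}$-measurable. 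In the Chung--Lu treatment this identification is handled (somewhat informally) by working on $\mathcal{G}$ throughout; your proposal is at the same level of rigor as the source the paper cites, so for the purposes of this paper your write-up is more than adequate.
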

The supermartingale concentration inequality is similar and we omit
it here.

In the following sections, we will use constants $c_0, c_1$ and $c_2$ which may depend on $\xi$ to characterize some bounds. We state here the conditions that the three constants should satisfy.
\begin{eqnarray}
&& (c_0-c_1-1)(1-1/(\xi+2))< c_1< 2(c_0-c_1-1)(1-2/(2+\xi))~\label{eqn:c1condition}\\
&&c_2=c_1\frac{\ln (\xi(1+\xi/2)+1)}{\ln
((7+400/\xi)^2(\xi(1+\xi/2)+1))}\enspace .~\label{eqn:c2condition}
\end{eqnarray}
Note that for fixed $\xi$ we can always choose $c_0$ to be large enough to guarantee that $c_2$ is also large, which will ensure that the bounds we obtain in the proof are good.

In the definition of our base model, a new vertex will create $2m$
self-loops if there is no existing vertex within distance at most
$r$ from it. This rule is made to guarantee that at each step the
degree of the graph grows by $2m$, which facilitates further
analysis. Moreover, in most interesting cases when $r=
r_0=n^{-1/2}(\ln n)^{c_0}$, if $t$ grows as large as
$\tau=O(\frac{n}{(\ln n)^{2c_0-1}})$, then with high probability for
any vertex that comes after time $\tau$, there will be many existing
nodes within distance at most $r$ from it. Therefore we will focus
on the processes that all the \textit{later} comers will choose
existing nodes as neighbors other than creating $2m$ self-loops.

In analyzing the degree sequence of our base model, it is convenient
to compare the chosen probability given in Eq. (\ref{eqn:prob}) with
the traditional case (eg.~\cite{Bo03:scalefree}), in which at each
step $t+1$ an existing vertex $v$ with degree $k$ is chosen with
probability $\frac{k}{2t}$, where $2t$ is the total degree of all
existing vertices. Thus it is natural to consider of using a good
estimation of (\ref{eqn:prob}) for further analysis. In particular,
we would like to have some good bound on the normalized quantity of
the denominator of (\ref{eqn:prob}). Let $T_t(u)$ denote this
quantity, namely, $T_t(u)=\sum_{v\in B_r(u)\cap
V_{t}}(\deg_t(v)+\delta)$. A closely related quantity is
$Z_t(u)=\sum_{v\in B_r(u)\cap V_{t}}1$, which is the number of
vertices in $B_r(u)$ at time $t$. We have several simple facts on
these two quantities.

\begin{lemma}~\label{lem:ttexp}
If $u\in S$ and $t>0$, then the expectation of $T_t(u)$ is $A_r(2m+\delta)t$.
\end{lemma}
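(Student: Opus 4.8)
The plan is to sidestep the (genuine) dependence between $\deg_t(v)$ and the event $\{v\in B_r(u)\}$ by combining two deterministic global identities with the rotational symmetry of the model, rather than trying to estimate that dependence.

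\textbf{Step 1: a deterministic degree count.} First I would record that for every realization of the process, the total degree of $G_t$ is exactly $2mt$ and $|V_t|=t$. This is an easy induction on $t$: $G_1$ consists of $x_1$ with $2m$ self-loops, contributing degree $2m$ (a self-loop counts as degree $1$); and passing from $G_t$ to $G_{t+1}$ adds exactly $2m$ to the total degree, regardless of whether $x_{t+1}$ creates $2m$ self-loops or instead draws $m$ contacts, since in the latter case each of the $m$ new edges adds $1$ to $\deg(x_{t+1})$ and $1$ to the degree of its contact. Hence $\sum_{v\in V_t}(\deg_t(v)+\delta)=(2m+\delta)t$ holds \emph{deterministically}, with no case analysis on whether the balls $B_r(x_s)$ were empty.

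\textbf{Step 2: average $T_t(u)$ over the sphere.} Writing $T_t(u)=\sum_{v\in V_t}(\deg_t(v)+\delta)\,\mathbf{1}[v\in B_r(u)]$ and using $\mathbf{1}[v\in B_r(u)]=\mathbf{1}[u\in B_r(v)]$ together with $\int_S \mathbf{1}[u\in B_r(v)]\,\ud u=\area(B_r(v))=A_r$, Fubini's theorem gives, for every realization,
$\int_S T_t(u)\,\ud u = A_r\sum_{v\in V_t}(\deg_t(v)+\delta)=A_r(2m+\delta)t$.
Taking expectations and applying Fubini once more yields $\int_S \E[T_t(u)]\,\ud u = A_r(2m+\delta)t$.

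\textbf{Step 3: symmetry.} The joint law of $G_t$ together with the vertex positions $x_1,\dots,x_t$ is invariant under any rotation of $S$, because the $x_i$ are i.i.d.\ uniform on $S$ and the attachment probabilities in Eq.~(\ref{eqn:prob}) depend only on the rotation-invariant angular distances and on degrees. Therefore $u\mapsto \E[T_t(u)]$ is constant on $S$; since $\area(S)=1$, this constant equals $\int_S \E[T_t(u)]\,\ud u = A_r(2m+\delta)t$, which is exactly the assertion.

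I do not expect a real obstacle here; the only pitfall is the tempting but false shortcut $\E[(\deg_t(v)+\delta)\mathbf{1}[v\in B_r(u)]]=\E[\deg_t(v)+\delta]\cdot\Pr[v\in B_r(u)]$, which fails because the location of $v$ affects which vertices $v$ (and later vertices) may attach to, hence $\deg_t(v)$. The average-then-symmetrize route avoids this dependence completely. (One could instead split $\deg_t(v)$ into per-step increments and push expectations through the attachment rule, but that computation is messier and in the end still leans on the same rotational symmetry.)
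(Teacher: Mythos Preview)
Your proof is correct and self-contained. The paper's own proof is terser: it splits off the $\delta$-term as $\delta A_r t$ (immediate from uniformity of the $x_i$) and then simply cites Lemmas~1 and~2 of Flaxman--Frieze--Vera for the remaining identity $\E\bigl[\sum_{v\in V_t}\deg_t(v)\,\mathbf{1}_{v\in B_r(u)}\bigr]=2mA_rt$, so the substantive step is outsourced. Your route---the deterministic spherical average $\int_S T_t(u)\,\ud u = A_r(2m+\delta)t$ followed by rotational invariance of $u\mapsto\E[T_t(u)]$---handles both pieces at once and uses nothing about the attachment rule beyond its rotation-invariance, which is a pleasant bonus. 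One small correction to your closing remark: the ``tempting shortcut'' $\E[\deg_t(x_i)\,\mathbf{1}_{x_i\in B_r(u)}]=\E[\deg_t(x_i)]\cdot\Pr[x_i\in B_r(u)]$ is in fact \emph{valid} here, because the very same rotational symmetry forces $\E[\deg_t(x_i)\mid x_i=p]$ to be constant in $p$, hence $\deg_t(x_i)$ and $\mathbf{1}_{x_i\in B_r(u)}$ are uncorrelated; but you are right that this is not obvious a priori and itself needs the symmetry argument, so your average-then-symmetrize packaging is still the cleaner presentation.
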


\begin{proof}
Note that
\begin{eqnarray}
\E[T_t(u)]&=&\E[\sum_{v\in B_r(u)\cap V_t}(\deg_t(v)+\delta)]=\E[\sum_{v\in V_t}(\deg_t(v)+\delta)1_{v\in B_r(u)}]\nonumber\\
&=&\E[\sum_{v\in V_t}\deg_t(v)1_{v\in B_r(u)}]+\delta A_rt \enspace. \label{eqn:ett}
\end{eqnarray}

The first part of~(\ref{eqn:ett}) is $2A_rmt$ as given in Lemma 1 and 2 in~\cite{FFV07:gpamodel1}, which completes the proof.
\end{proof}

Let $A_r$ denote the area of $B_r(v)$. Then $A_r=\area(B_r(v))\sim
r^2/4$, for $r=o(1)$. Let $t_r=\frac{12(\ln n)^2n^{c_1/c_0}}{r^{2(1-c_1/c_0)}}$ and thus
$A_rt_r\sim3(\ln n)^2(nr^2)^{c_1/c_0}$. We will consider that $r\geq r_0=n^{-1/2}(\ln n)^{c_0}$ and let $t_0:=t_{r_0}=\frac{12n}{(\ln n)^{2c_0-2c_1-2}}$.

We first give an estimation of the quantity $Z_t(u)$.

\begin{lemma}~\label{lem:zt}
If $r\geq r_0$, then for any $t\geq t_r$, with probability at least $1-2n^{-\ln n}$, we
have that
\begin{displaymath}
|Z_t(u)-A_rt|\leq \frac{1}{(nr^2)^{c_1/2c_0}} A_rt.
\end{displaymath}
\end{lemma}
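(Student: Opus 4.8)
The plan is to observe that $Z_t(u)$ is, by construction, a sum of $t$ independent indicator variables and then apply the Chernoff bound of Lemma~\ref{lem:chernoff} with the deviation parameter tuned to the target accuracy. Each vertex $x_i$, $1\le i\le t$, is generated uniformly at random from the unit-area surface $S$, independently of all the others, so the indicators $X_i:=\mathbf 1_{x_i\in B_r(u)}$ are independent, each equal to $1$ with probability $\area(B_r(u))=A_r$. Hence $Z_t(u)=\sum_{i=1}^t X_i$ with the $X_i\in[0,1]$ independent, $Z_t(u)$ is $\mathrm{Binomial}(t,A_r)$, and $\E[Z_t(u)]=A_r t$; note this uses only the random positions of the vertices, not the edge-creation rule, so no conditioning on the graph process is needed.

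I then set $\zeta:=(nr^2)^{-c_1/(2c_0)}$ and check that $0<\zeta\le 1$: since $r\ge r_0=n^{-1/2}(\ln n)^{c_0}$ we have $nr^2\ge(\ln n)^{2c_0}\ge 1$ for $n$ large, and $c_0,c_1>0$ by~(\ref{eqn:c1condition}). Applying Lemma~\ref{lem:chernoff} to $X=Z_t(u)$ with this $\zeta$ gives
\[
\Pr\left[\,|Z_t(u)-A_r t|\ge \frac{1}{(nr^2)^{c_1/2c_0}}A_r t\,\right]\ \le\ 2e^{-\zeta^2 A_r t/3},
\]
so it only remains to check that the exponent $\zeta^2 A_r t/3$ is at least $(\ln n)^2$, which makes the right-hand side at most $2e^{-(\ln n)^2}=2n^{-\ln n}$.

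For this last step I use the hypothesis $t\ge t_r$: since $\zeta^2 A_r t$ is nondecreasing in $t$ it suffices to evaluate it at $t=t_r$. Substituting $\zeta^2=(nr^2)^{-c_1/c_0}$ and $t_r=12(\ln n)^2 n^{c_1/c_0}r^{-2(1-c_1/c_0)}$, all powers of $n$ and of $r$ cancel and one is left with $\zeta^2 A_r t_r=12\,(A_r/r^2)(\ln n)^2$; since $A_r\sim r^2/4$ for $r=o(1)$ this is $\sim 3(\ln n)^2$, hence $\zeta^2 A_r t/3\ge\zeta^2 A_r t_r/3\ge(\ln n)^2$ for $n$ large — which is precisely why the constant $12$ and the factor $(\ln n)^2$ were built into the definition of $t_r$. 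Feeding this into the displayed inequality yields the claimed bound. I do not expect any real obstacle here: the statement is a single Chernoff estimate on a binomial random variable, and the only point needing care is the parameter bookkeeping — verifying $\zeta\le 1$ from $r\ge r_0$, and checking that $\zeta$ and the threshold $t_r$ are calibrated so that the concentration exponent reaches order $(\ln n)^2$.
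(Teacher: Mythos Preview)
Your proof is correct and follows exactly the paper's approach: the paper's own proof simply notes that $Z_t(u)=\sum_{i=1}^t \mathbf 1_{x_i\in B_r(u)}$ with $\Pr[x_i\in B_r(u)]=A_r$ and then invokes the Chernoff bound. Your write-up just makes explicit the parameter bookkeeping (the choice of $\zeta$ and the verification that $\zeta^2 A_r t_r/3\ge(\ln n)^2$) that the paper leaves implicit.
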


\begin{proof}
Noticing that $Z_t(u)=\sum_{i=1}^t1_{x_i\in B_r(u)}$ and that
$\Pr[1_{x_i\in B_r(u)}=1]=A_r$, we can obtain the result by simply
applying the Chernoff bound.
\end{proof}

From the above lemma, we can give a rough bound on $T_t(u)$.
\begin{lemma}~\label{lem:roughbound}
If $r\geq r_0$, then for any $t\geq t_r$, with probability at least $1-4n^{-\ln n}$, we have
that
\begin{eqnarray}
(1-\frac{1}{(nr^2)^{c_1/2c_0}})(1+\xi)mA_rt\leq T_t(u)\leq 4(1+\frac{1}{(nr^2)^{c_1/2c_0}})(2+\xi)mA_r.
\end{eqnarray}
\end{lemma}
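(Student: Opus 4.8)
The plan is to sandwich $T_t(u)$ between the ball‑count $Z_t(u)$ of Lemma~\ref{lem:zt} and its analogue $Z_t^{(2r)}(u):=\sum_{v\in B_{2r}(u)\cap V_t}1$ for the doubled radius, using only crude pointwise degree bounds and the Chernoff inequality. For the lower bound, observe that every vertex is born with at least $m$ incident edges ($m$ chosen contacts, or $2m$ self‑loops if it is isolated in its $r$‑ball) and its degree never decreases, so $\deg_t(v)+\delta\ge m+\delta=(1+\xi)m$ for all $v\in V_t$; hence $T_t(u)\ge (1+\xi)m\,Z_t(u)$, and Lemma~\ref{lem:zt} bounds $Z_t(u)$ from below by $\bigl(1-(nr^2)^{-c_1/2c_0}\bigr)A_rt$ off an event of probability at most $2n^{-\ln n}$. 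That already gives the left‑hand inequality.

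For the upper bound I would write $T_t(u)=\delta Z_t(u)+\sum_{v\in B_r(u)\cap V_t}\deg_t(v)$ and split each degree into the edges a vertex \emph{creates} (its out‑degree, always at most $2m$, including the self‑loop case) and the number of times it is later \emph{chosen} as a contact (its in‑degree, $\mathrm{indeg}_t$). The out‑degrees contribute at most $2m\,Z_t(u)$ in total. The point is how to handle the in‑degrees: a contact picked at step $s$ always lies in $B_r(x_s)$, so it can fall inside $B_r(u)$ only when $B_r(x_s)\cap B_r(u)\ne\emptyset$, which by the triangle inequality for the angular metric forces $x_s\in B_{2r}(u)$; and at most $m$ contacts are chosen per step. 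Hence $\sum_{v\in B_r(u)\cap V_t}\mathrm{indeg}_t(v)\le m\,Z_t^{(2r)}(u)$, and therefore $T_t(u)\le (2m+\delta)Z_t(u)+m\,Z_t^{(2r)}(u)$.

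It then remains to estimate $Z_t^{(2r)}(u)$ exactly as $Z_t(u)$ was estimated: since $t\ge t_r$ its mean $A_{2r}t\ge A_rt_r=\Theta\bigl((\ln n)^2(nr^2)^{c_1/c_0}\bigr)$ is polylogarithmically large (and $nr^2\ge (\ln n)^{2c_0}\ge 1$), so the Chernoff bound of Lemma~\ref{lem:chernoff} with relative deviation $(nr^2)^{-c_1/2c_0}$ gives $Z_t^{(2r)}(u)\le \bigl(1+(nr^2)^{-c_1/2c_0}\bigr)A_{2r}t$ off an event of probability at most $2n^{-\ln n}$. Feeding this together with the upper estimate on $Z_t(u)$ from Lemma~\ref{lem:zt} into the previous display, and using the elementary spherical‑cap inequality $A_{2r}\le 4A_r$ (indeed $A_{2r}/A_r\to 4$ as $r\to 0$), we get $T_t(u)\le \bigl(1+(nr^2)^{-c_1/2c_0}\bigr)\bigl((2m+\delta)+4m\bigr)A_rt=\bigl(1+(nr^2)^{-c_1/2c_0}\bigr)(6+\xi)mA_rt$, which is at most $4\bigl(1+(nr^2)^{-c_1/2c_0}\bigr)(2+\xi)mA_rt$ because $6+\xi\le 4(2+\xi)$ for $\xi>0$. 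A union bound over the at most two applications of Lemma~\ref{lem:zt}/Chernoff (radii $r$ and $2r$) gives the claimed total failure probability $4n^{-\ln n}$.

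The only step requiring genuine care is the in‑degree estimate: one should resist trying to control $\sum_{v\in B_r(u)\cap V_t}\deg_t(v)$ directly, since the set $B_r(u)\cap V_t$ and the degrees are correlated through the preferential‑attachment rule; charging each unit of in‑degree to the step that created it decouples the estimate and localizes it to the slightly larger ball $B_{2r}(u)$, after which everything reduces to Chernoff bounds for sums of independent indicators. The slack factor $4$ is deliberately generous — in fact the argument shows $T_t(u)$ is within a factor $4$ of $\E[T_t(u)]=(2m+\delta)A_rt$ from Lemma~\ref{lem:ttexp}.
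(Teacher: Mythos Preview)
Your argument is correct and is essentially the same as the paper's: the paper also bounds $T_t(u)$ below by $(1+\xi)m\,Z_t(u)$ and bounds the in-degree contribution inside $B_r(u)$ by $m$ times the vertex count in $B_{2r}(u)$, then applies the Chernoff estimate of Lemma~\ref{lem:zt} at radii $r$ and $2r$. The only cosmetic difference is that the paper writes the upper bound more tersely as $T_t(u)\le (m+\delta)Z_t(u)+m\,Z_t^{(2r)}(u)\le (2m+\delta)Z_t^{(2r)}(u)$ and passes directly to $4(2+\xi)mA_rt$, whereas you keep the $(2m+\delta)Z_t(u)$ and $m\,Z_t^{(2r)}(u)$ terms separate and get $(6+\xi)mA_rt$ before majorizing by $4(2+\xi)mA_rt$.
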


\begin{proof}
The left inequality is obvious by using the trivial relation that
$T_t(u)\geq m(1+\xi)Z_t$ and the bound on $Z_t$ given in
Lemma~\ref{lem:zt}.

To see the right inequality, we note that the sum of the degrees of
vertices in $B_r(u)$ is equal to the sum of out-degrees of all
vertices in $B_r(u)$, which is equal to $mZ_t$, plus the sum of the
in-degrees of vertices in $B_r(u)$, which is at most the sum of
out-degrees of all vertices in $B_{2r}(u)$. Therefore,
$T_t(u)\leq(m+\delta)Z_t+m\sum_{v\in V_t\cap B_{2r}(u)}1\leq
(2m+\delta)\sum_{v\in V_t\cap B_{2r}(u)}1\leq
(2m+\delta)A_{2r}t(1+\frac{1}{(nr^2)^{c_1/2c_0}}) = 4(2+\xi)mA_rt(1+\frac{1}{(nr^2)^{c_1/2c_0}})$, with
probability $1-2n^{-\ln n}$.
\end{proof}

\section{Small Diameter}~\label{sec:diameter}
It is obvious that the diameter in the base model is at least
$\Omega(1/r)=\Omega(n^{1/2}(\ln n)^{-c_0})$ for all $r\geq r_0$,, since any vertex can connect nodes that within
distance at most $r$ from it and the maximum distance of two
vertices is $\Omega(1)$. However, with the addition of the ability
to choose uniformly from the subset of previous vertices, the
diameter can be reduced to $O(\ln n)$, with high probability. We
will use the following classic result on the diameter and the
maximum degree of a uniform recursive tree.

\begin{lemma}~\label{lem:rrt}
With high probability, the diameter and the maximum degree in a
uniform recursive tree is $\Theta(\ln n)$.
\end{lemma}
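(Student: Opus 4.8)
\emph{Proof plan.} I would derive both the diameter and the maximum-degree bounds from one structural observation: when vertex $x_{t+1}$ is inserted it picks its unique neighbour uniformly among $x_1,\dots,x_t$, so each fixed earlier vertex gains the new edge with probability $1/t$, and these events are independent over the steps $t=1,\dots,n-1$. For each of the two quantities the plan is to prove an $O(\ln n)$ upper bound — which is all that Theorem~\ref{thm:diameter} actually uses — together with a matching $\Omega(\ln n)$ lower bound.

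\emph{Maximum degree.} For $i\ge 2$ the final degree of $x_i$ is $1+\sum_{t=i}^{n-1}\eta_{i,t}$, where $\eta_{i,t}$ indicates that $x_{t+1}$ chooses $x_i$; the $\eta_{i,t}$ are independent, $\Pr[\eta_{i,t}=1]=1/t$, so $\E[\deg(x_i)]\le 1+\sum_{t=1}^{n-1}1/t=\ln n+O(1)$ (and likewise for $x_1$). A standard Chernoff/large-deviation estimate then gives $\Pr[\deg(x_i)\ge C\ln n]\le n^{-2}$ once $C$ is a large enough absolute constant, and a union bound over the $n$ vertices yields maximum degree $O(\ln n)$ with probability $1-O(1/n)$. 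For the lower bound I would note that $\deg(x_1)=\sum_{t=1}^{n-1}\eta_{1,t}$ is a sum of independent Bernoulli$(1/t)$ variables with mean $\sim\ln n$, so Chernoff gives $\deg(x_1)\ge(1-\varepsilon)\ln n$ with high probability, and hence the maximum degree is $\Theta(\ln n)$.

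\emph{Diameter.} Let $L_{t,\ell}$ be the number of vertices at depth $\ell$ after $t$ insertions. Since a new vertex lands one level below a uniformly chosen existing vertex, $\E[L_{t+1,\ell}]=\E[L_{t,\ell}]+\frac{1}{t}\E[L_{t,\ell-1}]$, and a short induction identifies $\E[L_{t,\ell}]=c(t,\ell+1)/(t-1)!$, where the $c(t,k)$ are the unsigned Stirling numbers of the first kind (they satisfy $c(t+1,k)=c(t,k-1)+t\,c(t,k)$). Writing $H$ for the height, Markov's inequality gives $\Pr[H\ge h]\le\sum_{\ell\ge h}\E[L_{n,\ell}]=\frac{1}{(n-1)!}\sum_{k>h}c(n,k)$; using $\sum_k c(n,k)x^k=\prod_{j=0}^{n-1}(x+j)$ and $x^{k-h}\ge1$ for $k>h$, $x\ge1$, this is at most $x^{-h}\cdot x\prod_{j=1}^{n-1}(1+x/j)\le x\,e^{x\sum_{j<n}1/j}\,x^{-h}=O(n^x x^{-h})$. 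Choosing $x=e$ and $h=C\ln n$ makes the bound $O(n^{\,e-C})=n^{-\Omega(1)}$ for any $C>e$, so the height, and therefore the diameter (which is at most twice the height), is $O(\ln n)$ with high probability. For the lower bound, the depth $D_n$ of the last-inserted vertex has mean $\sum_{k=1}^{n-1}1/k\sim\ln n$; I would invoke the classical fact that $D_n$ has the same law as the number of records in a uniform random permutation of $[n-1]$, i.e.\ a sum of independent Bernoulli$(1/k)$, and apply Chernoff to get $D_n\ge(1-\varepsilon)\ln n$ with high probability (a Chebyshev bound using $\Var[D_n]=\Theta(\ln n)$ would also do), so the diameter is $\Omega(\ln n)$ as well.

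\emph{Where the work is.} Everything except the height upper bound is a routine Chernoff-plus-union-bound once the independence of the insertion steps is spelled out; the one genuinely non-mechanical step is taming the upper tail of the height, for which I would rely on the expected level profile and the Stirling-number generating function above. If one prefers to avoid Stirling numbers, the same conclusion follows by writing the depth $D_j$ of $x_j$ as a sum of independent Bernoulli$(1/k)$, $k<j$, and union-bounding $\sum_{j\le n}\Pr[D_j\ge C\ln n]$ with a large-deviation estimate; I would pick whichever version fits the surrounding exposition best.
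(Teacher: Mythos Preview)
Your proposal is correct and substantially more detailed than the paper's own treatment: the paper does not actually prove this lemma but simply cites the classical references (Pittel for the height and Devroye--Lu for the maximum degree), writing only ``This is a classic result, for which a proof is referred to such as~\cite{Pit94:rrt,DL95:degree}.'' So there is nothing to compare at the level of argument; you have supplied a self-contained sketch where the paper defers to the literature.

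For what it is worth, each ingredient you use is standard and sound. The maximum-degree argument via Chernoff plus union bound is exactly the usual one. For the height upper bound, both variants you describe (the Stirling-number generating-function trick and the direct union bound over $\Pr[D_j\ge C\ln n]$) are classical; the Stirling route is essentially Pittel's. The one assertion that a careful reader might want justified is the claim that the depth $D_n$ of the last vertex is distributed as a sum of independent $\mathrm{Bernoulli}(1/k)$; this is indeed true (the indicators $\xi_j=\mathbf{1}[j\text{ is an ancestor of }n]$, $1\le j\le n-1$, are independent with $\Pr[\xi_j=1]=1/j$), but if you plan to use this you might either cite it or include the short inductive verification. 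Your parenthetical Chebyshev alternative sidesteps this entirely and is enough for the $\Omega(\ln n)$ lower bound, so either way the argument closes.
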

\begin{proof} This is a classic result, for which a proof is referred to
 such as~\cite{Pit94:rrt,DL95:degree}.
 \end{proof}

 Now the diameter of
the two generalized models can be bounded as follows.

\begin{proof}[Proof of Theorem~\ref{thm:diameter}]
We consider the two models separately.
\begin{enumerate}
\item In the hybrid model, no matter how the local graph grows, the global
graph is the same as the uniform recursive tree, which gives an
upper bound $O(\ln n)$ on the diameter of the whole graph.
\item For the self-loop model, the constructed tree in the flexible part
are restricted to having degree at most $\delta$ and thus may be
different from a uniform recursive tree. However, by
Lemma~\ref{lem:rrt}, the maximum degree of a uniform recursive tree
is $L\ln n$, where $L$ is the hidden constant in $\Theta(\ln n)$,
from which we know that if $\delta\geq L\ln n$, then with high
probability, the constructed tree in the flexible part is the same
as the uniform recursive tree. Therefore, the diameter of the
self-loop model is again upper bounded by $O(\ln n)$. Finally, we
note that $\delta=m\xi\geq L\ln n$ is equivalent to $m\geq L\ln
n/\xi$, which completes the proof.
\end{enumerate}

This completes the proof of Theorem~\ref{thm:diameter}.
\end{proof}

\section{The Small-Community Phenomenon}~\label{sec:community}
In this section, we consider the community structure and we will require that $r=r_0$. We start from
the intuition that a group of people close to each other form a good
community, which can be thought of geographical
communities~(\cite{KB10:geocommunity}). In particular, for a node
$v$, we define the $R$-neighborhood $C_R(v)$ of $v$ to be the set of
vertices within distance at most $R$ from $v$ in $G_n$, i.e.,
$C_R(v)=B_R(v)\cap V_n$. Let $R_0=n^{-1/2}(\ln n)^{2c_0}$, we will
show that $C_{R_0}(v)$ is a good community. In this section, we will
assume that $m\geq K_2(\xi)\ln n$, where $K_2(\xi)$ is some large
constant depending on $\xi$.

Note that given $v$, the probability that a node generated uniformly
at random from $S$ will land in $B_{R_0}(v)$ is $A_{R_0}\sim
R_0^2/4=\frac{(\ln n)^{4c_0}}{4n}$. Using the Chernoff bound, it is
easy to show that with high probability, the number of nodes in
$C_{R_0}$ is $\Theta((\ln n)^{4c_0})$, which means that the size of
such $R$-neighborhood is small. Now we consider the connectivity of
the subgraph induced by $C_{R_0}(v)$.

\begin{lemma}~\label{lem:connectivity}
In the base model, if $r=r_0=n^{-1/2}(\ln n)^{c_0}$, then for any
$v\in V_n$, the $R_0$-neighborhood $C_{R_0}(v)$ induces a connected
subgraph in $G_n$ with high probability.
\end{lemma}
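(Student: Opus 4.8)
The plan is to show that $C_{R_0}(v)$ is connected by a covering argument: partition the cap $B_{R_0}(v)$ into many small sub-caps of radius $\Theta(r_0)$, show that with high probability every such sub-cap contains at least one vertex of $V_n$ and that any two overlapping (or adjacent) sub-caps have their vertices joined by an edge, and then conclude that the whole $R_0$-neighborhood is connected. Concretely, I would tile $B_{R_0}(v)$ by a grid of caps of radius $r_0/10$ (say), so that the number of cells is $\Theta((R_0/r_0)^2)=\Theta((\ln n)^{2c_0})$, which is polylogarithmic. For each cell $Q$, the probability that a uniformly generated node lands in $Q$ is $\area(Q)\sim (r_0/10)^2/4=\Theta((\ln n)^{2c_0}/n)$, so the expected number of the $n$ nodes falling in $Q$ is $\Theta((\ln n)^{2c_0})=\omega(\ln n)$. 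By the Chernoff bound (Lemma \ref{lem:chernoff}), each cell is nonempty with probability $1-n^{-\omega(1)}$; a union bound over the $\Theta((\ln n)^{2c_0})$ cells shows that with high probability every cell contains at least one vertex.

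Next I would argue that vertices in nearby cells are actually connected in $G_n$. Two vertices $x,y$ lying in cells whose centers are within distance, say, $r_0/2$ satisfy $\|x-y\|\le r_0$, so whichever of $x,y$ arrived later had the other one available as a potential contact inside its radius-$r_0$ cap. However, being \emph{available} is not the same as being \emph{chosen}, so a direct edge need not exist. To get around this, I would instead use the fact that there are $\Theta((\ln n)^{2c_0})$ vertices in any fixed cell $Q$ (again by Chernoff), and combine it with the lower bound on $T_t(u)$ from Lemma \ref{lem:roughbound}: when a later vertex $w$ in an adjacent cell picks its $m\ge K_2(\xi)\ln n$ contacts from $B_{r_0}(w)\cap V_t$, each of the $m$ independent draws hits the particular cell $Q$ with probability at least $\vol(Q\cap V_t)/T_t(w)=\Omega((\ln n)^{2c_0}\cdot(1+\xi)m\cdot A_{r_0/10}t / ((2+\xi)mA_{r_0}t))=\Omega((\ln n)^{2c_0}/\mathrm{poly})$; actually it suffices that this probability is bounded below by a positive constant times $(r_0/10)^2/r_0^2=\Omega(1)$ up to the cell-count normalization, so with $m=\Theta(\ln n)$ draws, $w$ connects into cell $Q$ with probability $1-n^{-\omega(1)}$. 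A union bound over all (later vertex, adjacent cell) pairs — there are at most $n\cdot O(1)$ of them once we restrict attention to vertices landing in $B_{2R_0}(v)$ — then shows that, with high probability, the bipartite-adjacency graph on cells of $B_{R_0}(v)$ is such that every nonempty cell is linked by an edge of $G_n$ to every adjacent nonempty cell. Since the grid of cells covering a spherical cap is itself connected under the adjacency relation, $C_{R_0}(v)$ is connected.

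Finally, I would union-bound over all $v\in V_n$ (only $n$ choices, each failing with probability $n^{-\omega(1)}$), and also absorb the low-probability events from Lemmas \ref{lem:zt} and \ref{lem:roughbound}, which are valid for $t\ge t_{r_0}$; the vertices arriving before $t_{r_0}=O(n/(\ln n)^{2c_0-2c_1-2})$ are handled by the remark in Section \ref{sec:tools} that ensures later comers find nonempty neighborhoods, and the early vertices are few enough (and $R_0\gg r_0$ gives enough slack) not to disconnect the cap. The main obstacle I anticipate is precisely the gap between "a neighbor is available" and "a neighbor is chosen": controlling the latter requires the lower bound $m\ge K_2(\xi)\ln n$ together with the two-sided control on $T_t(u)$ from Lemma \ref{lem:roughbound}, and care is needed because $T_t$ is a random quantity whose fluctuations must be dominated uniformly over the polylogarithmically many cells and the linearly many later vertices; this is where the constant $K_2(\xi)$ and the precise exponent $c_0$ enter.
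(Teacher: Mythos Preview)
Your covering argument has a genuine logical gap at the step ``every nonempty cell is linked by an edge of $G_n$ to every adjacent nonempty cell $\Rightarrow$ $C_{R_0}(v)$ is connected.'' An edge between each pair of adjacent cells does \emph{not} imply connectivity of the induced subgraph, because you never establish that the vertices \emph{inside} a single cell lie in one component. Concretely, if cell $Q$ contains $a,b$ and the adjacent cell $Q'$ contains $c,d$, a single cross edge $(a,c)$ makes the cells ``linked'' while $b$ and $d$ may still be isolated. Even your stronger statement --- that every late vertex $w$ has an edge into every adjacent cell --- only shows that the chain from $w$ backwards along ``own-cell'' edges terminates at some early vertex of that cell; different late vertices can terminate at different early vertices, and you give no argument connecting those $\Theta(\ln n)$ early vertices to one another. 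The sentence ``the early vertices are few enough \ldots not to disconnect the cap'' is exactly the missing piece, not a justification.

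The paper avoids this by working at the scale $r/2$ rather than $r/10$ and running a component-count argument instead of a covering argument. It first proves that $C_{r/2}(v)$ is connected for every $v$: since any two points of $B_{r/2}(v)$ are within distance $r$, each new arrival in $B_{r/2}(v)$ can potentially attach to \emph{every} existing vertex there, and with $m\ge K_2(\xi)\ln n$ draws one shows that a new arrival merges at least two components with probability $\ge 7/10$ whenever more than one component exists; hence the initial $O(\ln n)$ components coalesce after $O(\ln n)$ further arrivals. Connectivity of $C_{R_0}(v)$ then follows by chaining overlapping $r/2$-caps. To repair your proof you would essentially have to run this same component-merging argument inside each of your cells (which is possible, since your cells also have diameter $<r$), at which point the tiling adds nothing over the paper's direct chain-of-caps reduction.
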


\begin{proof}
We will first show that for every $v$, $C_{r/2}(v)$ induces a
connected subgraph in $G_n$ with high probability. The lemma then
follows from the fact that any two vertices $u, u'$ in $C_{R_0}(v)$
can be connected by a set of paths between vertices
$u=v_1,v_2,\cdots,v_k=u'$ such that each vertex pair $(v_i,v_{i+1})$
is within distance $r/2$.

Now we consider the connectivity of $C_{r/2}(v)$.

Let $A_{r}T=12\ln n$, and thus $T=\frac{12n}{(\ln n)^{2c_0-1}}$. Let
$H_0$ be the subgraph induced by nodes within distance at most $r/2$
from $v$ at time $T$. Now let $x_{t_1},\cdots,x_{t_k}$ be the nodes
that land in $B_{r/2}(v)$ after time $T$ and let $H_s$ be the
corresponding subgraph when vertex $x_{t_s}$ is added in
$B_{r/2}(v)$. Since every vertex $x_j$ will land in $B_{r/2}(v)$
with probability $A_{r/2}$, we know that with high probability, for
$t\geq T$, the number of nodes in $B_{r/2}(v)$ will be in the range
$[\kappa_1A_{r/2}t, \kappa_2A_{r/2}t]$ for some constants
$\kappa_1,\kappa_2$. In particular, we have that $|H_0|\leq
\kappa_2A_{r/2}T = 3\kappa_2\ln n$ and
$\kappa_1A_{r/2}t_s\leq|H_s|\leq\kappa_2A_{r/2}t_s$.

Now let $X_s$ be the number of connected components of $H_s$ and let
$Y_s$ be the number of connected components of $H_s$ connected to
$x_{t_{s+1}}$. Then we have
\begin{eqnarray}
X_{s+1}=X_s-Y_s+1, X_0\leq 3\kappa_2\ln n\enspace.\nonumber
\end{eqnarray}

We show that if $s\leq 6\kappa_2\ln n$, $X_s$ decreases by at least
$1$ for every $s\geq 1$ with probability at least $\frac{7}{10}$,
from which we know that the probability that $H_{6\kappa_2\ln n}$ is
not connected is bounded by $O(n^{-3})$ and then the Lemma follows
from the fact that each later coming vertex $x_{t_{s+1}}$ such that
$s\geq6\kappa_2\ln n$ will connect the $H_s$ with probability at
least $1-O(n^{-10})$.

Let $\mathcal{E}$ denote the event that for any $u\in V_n$ and for
each $t\geq T$, $T_t(u)\leq 32(2+\xi)mA_{r/2}t$, then as in the proof
Lemma~\ref{lem:roughbound}, the probability that $\mathcal{E}$ holds
is $1-O(n^{-4})$. Now Conditioned on $\mathcal{E}$, for each $1\leq
s\leq 6\kappa_2\ln n$, since $x_{t_s}$ is in $B_{r/2}(v)$, we have
that $|x_{t_{s}}-u|\leq r$ for every vertex $u\in H_{s-1}$ and thus
$x_{t_{s}}$ will connect $u$ with probability at least
\begin{eqnarray}
\frac{m+\delta}{T_{t_s-1}(x_{t_s})}\geq
\frac{1}{32A_{r/2}t_s}\enspace . \nonumber
\end{eqnarray}

Therefore, the probability that $x_{t_s}$ will not connect any
vertex in $B_{r/2}(v)$ is
\begin{eqnarray}
\Pr[Y_s=0]\leq(1-\frac{|H_s|}{32A_{r/2}t_s})^m\leq n^{-10}\enspace,
\nonumber
\end{eqnarray}
where the last inequality follows from the fact that $m\geq
K_2(\xi)\ln n$.

Now we consider the case that $H_s$ has at least two connected
components, namely, $X_s\geq 2$. The probability that $x_{t_{s+1}}$
will connect at most one component is that
\begin{eqnarray}
\Pr[Y_s=1|X_s\geq 2]\leq 2(1-\frac{1}{32A_{r/2}t_s})^m\leq
1/10\enspace, \nonumber
\end{eqnarray}
where we used the fact that $32A_{r/2}t_s\geq96\ln n$ and that
$m\geq K_2(\xi)\ln n$.

Therefore, $X_s$ decreases by at least $1$ for every $1\leq s\leq
6\kappa_2\ln n$ with probability at least $\frac{7}{10}$, which
completes our proof.
\end{proof}

Now we show that the conductance of $C_{R_0}(v)$ in each model is
small.
\begin{lemma}~\label{lem:conduc}
In both the hybrid model and the self-loop model, with high
probability, for any $v\in V_n$, we have that
\begin{eqnarray}
\mathrm \Phi(C_{R_0}(v))=
O\left(\frac{1}{|C_{R_0}(v)|^{1/4c_0}}\right)\enspace
.~\label{lem:conduc}
\end{eqnarray}
\end{lemma}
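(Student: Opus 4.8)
The plan is to condition on the vertex positions $x_1,\dots,x_n$ (observe that $C_{R_0}(v)=B_{R_0}(v)\cap V_n$ depends only on these positions, not on the edges drawn later), work on a typical configuration, and then bound the numerator $|e(C_{R_0}(v),\overline{C_{R_0}(v)})|$ and the denominator $\min\{\vol(C_{R_0}(v)),\vol(\overline{C_{R_0}(v)})\}$ separately and uniformly over all $v$. As already noted, Lemma~\ref{lem:chernoff} gives $|C_{R_0}(v)|=\Theta((\ln n)^{4c_0})$ for every $v$ with high probability; applied to the two thin shells $B_{R_0}(v)\setminus B_{R_0-r_0}(v)$ and $B_{R_0+r_0}(v)\setminus B_{R_0}(v)$ (each of area $\Theta((\ln n)^{3c_0}/n)$, since $R_0r_0=n^{-1}(\ln n)^{3c_0}$), the same estimate shows each shell contains $O((\ln n)^{3c_0})$ vertices for every $v$ with high probability. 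I then split the edges of each model into the \emph{geometric} edges (the $m$ local contacts of the hybrid model; the $m$ non-flexible contacts of the self-loop model) and the \emph{tree} edges (the single long contact of the hybrid model; the single flexible rewiring edge of the self-loop model).

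For the geometric edges I would use a deterministic shell argument: if a geometric edge joins $u\in C_{R_0}(v)$ to $u'\notin C_{R_0}(v)$ then $\|u-u'\|\le r_0$, which forces $u$ into the inner shell $B_{R_0}(v)\setminus B_{R_0-r_0}(v)$ and $u'$ into the outer shell $B_{R_0+r_0}(v)\setminus B_{R_0}(v)$; moreover this edge was created, as one of the $\le m$ geometric out-edges, by whichever of $u,u'$ was born later. Hence the number of crossing geometric edges is at most $m\,(|(\text{inner shell})\cap V_n|+|(\text{outer shell})\cap V_n|)=O(m(\ln n)^{3c_0})$ for every $v$, with high probability.

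For the tree edges the bound needed is only $O(|C_{R_0}(v)|)=O((\ln n)^{4c_0})$. The $\le|C_{R_0}(v)|$ tree out-edges are trivial. For the tree in-edges, in the hybrid model the long contact of $x_{t+1}$ is an independent uniform element of $V_t$, so the number landing in $C_{R_0}(v)$ is a sum of independent Bernoullis of total mean $\sum_t|C_{R_0}(v)\cap V_t|/t$, which is $O((\ln n)^{4c_0})$ (using $|C_{R_0}(v)\cap V_t|\le\min\{|C_{R_0}(v)|,O(A_{R_0}t)\}$ and treating $t$ below and above a threshold $\sim n/(\ln n)^{4c_0-2}$ separately), so Lemma~\ref{lem:chernoff} finishes it. In the self-loop model the rewiring targets are not independent, so instead I would first observe that at every time $t$ at least $t/2$ vertices still carry a flexible self-loop — only $\le 2t/\delta=o(t)$ vertices can have exhausted their $\delta=\xi m\ge \xi K_2\ln n$ slots — so the conditional probability that step $t$ sends a flexible edge into $C_{R_0}(v)$ is at most $2|C_{R_0}(v)\cap V_t|/t$; the submartingale inequality of Lemma~\ref{lem:submar}, with variance proxies $\sigma_t^2\le 2|C_{R_0}(v)\cap V_t|/t$ summing to $O((\ln n)^{4c_0})$ and unit increments, then gives the same $O((\ln n)^{4c_0})$ bound with high probability. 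All these estimates survive the union bound over the $n$ choices of $v$ because every mean involved is polylogarithmic of exponent $\ge 3c_0\gg 1$.

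Finally, every vertex has degree at least $m$ (it contributes at least $m$ own-edges; at least $m+\delta$ in the self-loop model), so $\vol(C_{R_0}(v))\ge m|C_{R_0}(v)|$, while $\vol(\overline{C_{R_0}(v)})=\vol(V_n)-\vol(C_{R_0}(v))=\Theta(mn)-O(m(\ln n)^{4c_0})=\Theta(mn)$ dominates it; hence $\min\{\vol(C_{R_0}(v)),\vol(\overline{C_{R_0}(v)})\}=\vol(C_{R_0}(v))\ge m|C_{R_0}(v)|$. Dividing,
\[
\mathrm\Phi(C_{R_0}(v))\ \le\ \frac{O(m(\ln n)^{3c_0})+O((\ln n)^{4c_0})}{m\,|C_{R_0}(v)|}\ =\ O\!\left(\frac{1}{(\ln n)^{c_0}}\right)+O\!\left(\frac{1}{m}\right)\ =\ O\!\left(\frac{1}{\ln n}\right),
\]
using $c_0\ge 1$ and $m\ge K_2(\xi)\ln n$; since $|C_{R_0}(v)|^{1/4c_0}=\Theta(\ln n)$, this is exactly $O(|C_{R_0}(v)|^{-1/4c_0})$. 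The main obstacle is the tree-edge estimate in the self-loop model: because the rewiring targets depend on the history, controlling them needs both the deterministic lower bound on the pool of vertices with free flexible self-loops and a martingale concentration argument rather than a plain Chernoff bound. By contrast, the geometric part is an essentially deterministic annulus computation combined with the point-count concentration already available, and the only subtlety there is carrying all the estimates uniformly over the $n$ choices of $v$.
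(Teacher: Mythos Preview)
Your proof is correct and follows the paper's overall structure: split the crossing edges of $C=C_{R_0}(v)$ into geometric edges (handled by the shell/annulus argument, giving $O(mnr_0R_0)=O(m(\ln n)^{3c_0})$) and tree edges (bounded by $O(A_{R_0}n)=O((\ln n)^{4c_0})$), then divide by $\vol(C)\ge m|C|$ to obtain $\Phi(C)=O((\ln n)^{-1})=O(|C|^{-1/4c_0})$. The only noteworthy differences are in how the tree-edge bound is established. For the hybrid model, the paper packages this as a separate lemma (Lemma~\ref{lem:longdegree}) proving $Y_n\le cA_{R_0}n$ for the total long-degree $Y_t$ of $C\cap V_t$ via the submartingale inequality applied to the recursion $\E[Y_{t+1}\mid Y_t]\le Y_t+(1+\kappa_2)A_{R_0}$; your direct Chernoff on the independent long-contact indicators (after conditioning on positions) is a cleaner route to the same bound. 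For the self-loop model the simplification runs the other way: rather than your pool-size argument (at most $2t/\delta$ vertices have exhausted their $\delta$ slots) plus a submartingale, the paper simply observes that since the maximum degree of a uniform recursive tree is $\Theta(\ln n)<\delta$ with high probability (Lemma~\ref{lem:rrt}), the degree cap never binds and the flexible part \emph{is} a URT, so the hybrid analysis applies verbatim. Both routes are valid; yours is more self-contained, the paper's is shorter.
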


\begin{proof}
We first consider the hybrid model. For convenience, we abbreviate
$C_{R_0}(v)$ as $C$. Let $e(C,\bar{C})$ denote the set of edges that
connecting $C$ and its complement. Let $e_{1}(C,\bar{C})$ and
$e_{2}(C,\bar{C})$ denote edges in $e(C,\bar{C})$ that are local and
long, respectively. Then we have: $e(C,\bar{C})=e_1(C,\bar{C})\cup
e_2(C,\bar{C})$.

Local edges connecting $C$ and $\bar{C}$ must lie between the two
spherical segments separated by the boundary of $C_{R_0}(v)$. More
specifically, if $e=(u,w)\in e_1(C,\bar{C})$, then one of $u,w$ lies
on the strip $str_1=B_{R_0+r}(v)\backslash B_{R_0}(v)$ and the other
point lies on the strip $str_2=B_{R_0-r}(v)\backslash B_{R_0}(v)$.
With high probability, the total number of vertices in $str_1$ is at
most $n(2rR_0+r^2)$ and the total number of vertices in $str_2$ is
at most $n(2rR_0-r^2)$. Hence the number of local edges that lies
between the two strips is at most $4mnrR_0$, namely,
$|e_1(C,\bar{C})|\leq 4mnrR_0$.

Now we consider the long edges that connects $C$ and the rest of the
graph. We will show that the number of such edges is relatively
small compared with the local edges therein. More precisely, we have
the following lemma.

\begin{lemma}~\label{lem:longdegree}
In the hybrid model, let $Y_t$ denote the sum of the long-degrees of
vertices in $B_{R_0}(v)\cap V_t$. Then $Y_n\leq cA_{R_0}n$ for some
constant $c$, with high probability.
\end{lemma}

\begin{proof}
By definition, we have the following recurrence for $Y_t$.
\begin{eqnarray}
\E[Y_{t+1}|Y_t]=Y_t+A_{R_0}+\frac{|B_{R_0}(v)\cap V_t|}{t}\enspace.
\end{eqnarray}

Let $A_{R_0}T=12\ln n$, and thus $T=\frac{12n}{(\ln n)^{4c_0-1}}$.
Let $\mathcal{F}$ denote the event that for all $t\geq T$, the
relation $|B_{R_0}(v)\cap V_t|\in [\kappa_1A_{R_0}t,
\kappa_2A_{R_0}t]$ holds for some constants $\kappa_1$ and
$\kappa_2$ and that the maximum long-degree of vertices $x_1,\cdots,
x_T$ is $L\ln n$. By Lemma~\ref{lem:rrt} and the Chernoff bound, we
know that $\Pr[\mathcal{F}]\geq 1-O(n^{-3})$.

Now we know that for $t\geq T$,
\begin{eqnarray}
\E[Y_{t+1}|Y_t,\mathcal{F}]\leq
Y_t+A_{R_0}+\frac{\kappa_2A_{R_0}t}{t}\enspace,\nonumber
\end{eqnarray}
from which we have
\begin{eqnarray}
\E[Y_{t+1}|Y_t,\mathcal{F}]-(1+\kappa_2)A_{R_0}(t+1)\leq
Y_t-(1+\kappa_2)A_{R_0}t\enspace.~\label{eqn:longdegree}
\end{eqnarray}

Conditioned on $\mathcal{F}$, we know that the number of vertices in
$B_{R_0}(v)\cap V_t$ is $\kappa_2A_{R_0}T\leq 12\kappa_2\ln n$ and
every vertex in this set has degree at most $L\ln n$, from which we
know that $Y_T\leq 12\kappa_2L(\ln n)^2$. Now define
\begin{displaymath}
X_\tau=\left\{ \begin{array}{ll}
Y_\tau-(1+\kappa_2)A_{R_0}\tau & \textrm{for $\tau\geq T+1$}\enspace, \\
12\kappa_2L(\ln n)^2 & \textrm{for $\tau=T$}\enspace.
\end{array}
\right.
\end{displaymath}

By inequality~(\ref{eqn:longdegree}), $X_T,\cdots, X_t$ forms a
submartingale with error $O(n^{-3})$. We also have that for
$\tau>T$,
\begin{eqnarray}
X_{\tau}-\E[X_{\tau}|X_{\tau-1}]\leq 1\enspace,\nonumber
\end{eqnarray}
and
\begin{eqnarray}
\Var[X_{\tau}|X_{\tau-1}]&=&\Var[Y_{\tau}|X_{\tau-1}]\nonumber\\
&\leq&\E[(Y_\tau-Y_{\tau-1})^2|X_{\tau-1}]\nonumber\\
&\leq&(1+\kappa_2)A_{R_0}\enspace.\nonumber
\end{eqnarray}

Now we apply the submartingale concentration inequality as in
Lemma~\ref{lem:submar}, we have that
\begin{eqnarray}
\Pr[X_t\geq X_T+\lambda]&\leq&
e^{-\frac{\lambda^2}{2(\sum_{\tau=T+1}^t(1+C_2)A_{R_0}+\lambda/3)}}+O(n^{-3})\nonumber\\
&\leq
&e^{-\frac{\lambda^2}{2t(1+C_2)A_{R_0}+2\lambda/3}}+O(n^{-3})\enspace.\nonumber
\end{eqnarray}

Let $\lambda=c'\sqrt{\ln nA_{R_0}t}$ for some constant $c'$. Then
\begin{eqnarray}
\Pr[X_t\geq X_T+c'\sqrt{\ln nA_{R_0}t}]\leq
O(n^{-3})\enspace.\nonumber
\end{eqnarray}

Finally, using $X_t=Y_t-(1+\kappa_2)A_{R_0}t$, we have
\begin{eqnarray}
\Pr[Y_t\geq (1+\kappa_2)A_{R_0}t+c'\sqrt{\ln
nA_{R_0}t}+12\kappa_2L(\ln n)^2]\leq O(n^{-3})\enspace.\nonumber
\end{eqnarray}

In particular, with high probability $Y_n\leq cA_{R_0}n$ for some
constant $c$, which completes the proof.
\end{proof}

By Lemma 9, we know that $|e_2(C,\bar{C})|\leq cA_{R_0}n$.

Thus, the total number of edges between $C$ and $\bar{C}$ is
\begin{eqnarray}
|e(C,\bar{C})|= O(mrR_0n+R_0^2n)\enspace.
\end{eqnarray}

The volume of $C$ is at least $m|C|\sim mR_0^2n$, which means that
\begin{eqnarray}
\mathrm \Phi(C)=O\left(\frac{m4rR_0n+R_0^2n}{mR_0^2n}\right)=O((\ln
n)^{-1})=O\left(\frac{1}{|C|^{1/(4c_0)}}\right) \enspace.
~\label{eqn:conduc}
\end{eqnarray}

Finally, we briefly discuss the case in the self-loop model. Let
$\delta \geq K_1(\xi) \ln n$, then with high probability, the
constructed tree in the flexible part of the model is a uniform
recursive tree as the same as that in the proof of
Theorem~\ref{thm:diameter}. Therefore, the edges that connect an
$R$-neighborhood and its complement can be also bounded by the same
argument as that in the case of the hybrid model, which then gives
the same result as~(\ref{eqn:conduc}).
\end{proof}

Now we can show that the two models have the small-community
phenomenon.
\begin{proof}[Proof of Theorem~\ref{thm:community}]
For each $v\in V_n$, the $R_0$-neighborhood $C_{R_0}(v)$ is of size
$\Theta((\ln n)^{4c_0})$. By Lemmas 7 and 8, we know that
$C_{R_0}(v)$ is an $(\alpha, \beta, \gamma)$-community of $v$, where
$\alpha$ is the hidden constant in term
$O\left(\frac{1}{|C_{R_0}(v)|^{1/4c_0}}\right)$ in
Eq.~(\ref{eqn:conduc}), $\beta=1/4c_0$, and $ \gamma=4c_0$. This
completes the proof of Theorem~\ref{thm:community}.
\end{proof}

Note that the proof of Theorem~\ref{thm:community} also implies that the base model $G_t$ has the small-community phenomenon. In fact, in this case, we do not need to consider the effect of the edges generated in the uniform recursive tree, which simplifies the analysis. We can easily show that the $R_0$-neighborhood $C_{R_0}(v)$ has small size, induces a connected subgraph and has conductance $\Phi(C_{R_0}(v))=O(\frac{m4rR_0n}{mR_0^2n})=O((\ln n)^{-c_0})=O(\frac{1}{|C_{R_0}(v)|^{1/4}})\leq\frac{\alpha'}{|C_{R_0}(v)|^{1/4}}$, i.e., every node in the base model is contained in a $(\alpha', 1/4,4c_0)$-community.

\section{The Power Law Degree Distribution}~\label{sec:power}
In this section we prove Theorems~\ref{thm:basedeg}
and~\ref{thm:power}. In Subsection~\ref{subsec:basedeg}, we prove
Theorem~\ref{thm:basedeg} by assuming a concentration inequality of
the degree sequence, in Subsection~\ref{subsec:alterconcen}, we
develop an {\it alternating concentration method} to prove the
concentration inequality desired, and in
Subsection~\ref{subsec:power}, we prove Theorem~\ref{thm:power}.

\subsection{The degree sequence on the base model}
\label{subsec:basedeg} To prove Theorem~\ref{thm:basedeg}, we
analyze a recurrence on $\E[d_k(t)]$ as usual. Recall that
$T_t(u)=\sum_{v\in B_r(u)\cap V_t}(\deg_t(v)+\delta)$. As mentioned
above, we will first give a good estimation of $T_t(u)$ and show
that $T_t(u)$ concentrates around its expectation, building on which
we can derive the degree sequence from the recurrence on
$\E[d_k(t)]$.

%For conciseness, we give the proof of Theorem 3 in the case of
%$r=r_0$, since the case for $r>r_0$ is similar.

Recall that $t_r=\frac{12(\ln n)^2n^{c_1/c_0}}{r^{2(1-c_1/c_0)}}$ for any $r\geq r_0$. We have the following concentration
inequality of $T_t(u)$:
\begin{lemma}~\label{lem:exabound}(Alternating Concentration Theorem)
If $r\geq r_0$, then for all $t\geq t_r$, we have that
\begin{eqnarray}
\Pr[|T_t(u)-(2+\xi)mA_rt|\geq \frac{1}{(nr^2)^{c_2/2c_0}}mA_rt]=
O(n^{-2})\enspace,\label{eqn:tt}
\end{eqnarray}
where $c_1, c_2$ are some constants satisfying the conditions in Eq.~(\ref{eqn:c1condition}) and
(\ref{eqn:c2condition}).
\end{lemma}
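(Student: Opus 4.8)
The plan is to establish the sharp concentration bound~(\ref{eqn:tt}) by an iterative ``bootstrapping'' argument: start from the crude two-sided bound of Lemma~\ref{lem:roughbound} (which controls $T_t(u)$ only up to a multiplicative constant) and repeatedly feed the current estimate back into a martingale analysis to shrink the error exponent from $c_1/2c_0$ down toward $c_2/2c_0$. The quantity to track is, for a fixed $u\in S$, the process $T_t(u)=\sum_{v\in B_r(u)\cap V_t}(\deg_t(v)+\delta)$ as $t$ increases from $t_r$ onward. Its one-step increment, conditioned on $\mathcal F_{t}$, comes from two sources: the new vertex $x_{t+1}$ landing in $B_r(u)$ (contributing $2m+\delta$ in expectation, weighted by $A_r$), and the $m$ contacts chosen by $x_{t+1}$ that happen to fall in $B_r(u)$, each raising a degree by $1$. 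The expected increment is therefore close to $A_r(2m+\delta)=A_r(2+\xi)m$, matching the target mean in Lemma~\ref{lem:ttexp}; the subtlety is that the contact-selection term depends on $T_{t}(x_{t+1})$ through the preferential-attachment rule~(\ref{eqn:prob}), so the drift of $T_t(u)$ is only controlled \emph{conditionally} on a good event asserting that $T_{s}(w)$ is already close to its mean for all relevant $w,s$ — which is exactly the inductive hypothesis at the previous, weaker precision.

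Concretely, I would run an induction on a precision parameter. Suppose at stage $j$ we know that, with probability $1-O(n^{-2})$, $|T_s(w)-(2+\xi)mA_rs|\le \varepsilon_j\, mA_r s$ for all $w$ and all $s\ge t_r$, with $\varepsilon_0=(nr^2)^{-c_1/2c_0}$ from Lemma~\ref{lem:roughbound}. Conditioning on this event $\mathcal G_j$, I form the shifted process $X_t = T_t(u) - (2+\xi)mA_r t$ (and separately its negative) and check the three hypotheses of Lemma~\ref{lem:submar}: the conditional drift is $\le 0$ (resp.\ $\ge 0$) up to an additive slack of order $\varepsilon_j\, mA_r$ per step coming from the imprecise denominator; the conditional variance of the increment is $O(mA_r)$ (bounded increments of size $O(m)$ happening with probability $O(A_r)$, roughly); and the increment itself is bounded by $M=O(m)$. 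Summing the variance over $t$ steps gives $\sum\sigma_i^2 = O(mA_r t)$, and choosing the deviation $\lambda$ of order $\sqrt{mA_r t\ln n}$ plus the accumulated drift slack $O(\varepsilon_j mA_r t)$, the submartingale inequality yields a new bound with $\varepsilon_{j+1} \approx \max\{\sqrt{\ln n/(mA_r t)},\ \text{(constant)}\cdot\varepsilon_j\}$. The constant contraction factor is governed by the geometry — a vertex in $B_r(u)$ draws contacts from $B_r$ of that vertex, which sits inside $B_{2r}(u)$, so one gains a factor like $(7+400/\xi)^{-1}$ or similar per iteration — and after $O(\log_{1/\rho}(c_0))$ iterations the exponent stabilizes at the value $c_2/2c_0$ dictated by Eq.~(\ref{eqn:c2condition}), with the $\sqrt{\ln n/(mA_r t)}$ term being negligible because $A_r t\ge A_r t_r \sim 3(\ln n)^2 (nr^2)^{c_1/c_0}$. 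A union bound over the $O(\log c_0)$ stages and over a suitably fine net of centers $u$ (or directly over all $x_i$, of which there are $n$) keeps the total failure probability $O(n^{-2})$; the constraint~(\ref{eqn:c1condition}) on $c_1$ is precisely what makes both the variance term and the iteration-slack term affordable against the $(nr^2)^{-c_2/2c_0}$ target.

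The main obstacle — and the reason this needs ``alternating'' sub- and super-martingale steps rather than a single Azuma application — is the circularity between the drift of $T_t(u)$ and the accuracy of the $T_s(w)$'s that govern it: one cannot get a sharp bound in one shot because the drift error is itself proportional to the current uncertainty. The device that breaks this is to alternate: use the current \emph{upper} bound on all $T_s(w)$ to get a one-sided (super-martingale) improvement on $T_t(u)$, then use the current \emph{lower} bound to get the matching sub-martingale improvement, so that each half-step legitimately tightens one side using only already-established control on the other. Making the contraction quantitative — verifying that the geometric loss $B_{2r}$ vs.\ $B_r$, the variance accumulation, and the logarithmic deviation all combine to the precise exponent $c_2/2c_0$ with the stated conditions on $c_0,c_1,c_2$ — is the delicate bookkeeping; everything else (the Chernoff estimate for $Z_t$, Lemma~\ref{lem:ttexp} for the mean, the crude Lemma~\ref{lem:roughbound} base case) is already in hand. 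I would defer the full iteration to Subsection~\ref{subsec:alterconcen} as the paper indicates, presenting here only the reduction to the inductive step and the statement of the per-stage contraction.
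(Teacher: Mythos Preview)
Your overall architecture matches the paper's: an iterative bootstrap starting from Lemma~\ref{lem:roughbound}, alternating a submartingale step (lower bound on $T_s(w)$ yields an upper bound on $T_t(u)$) with a supermartingale step (upper bound yields a new lower bound), terminating when the geometric contraction meets the growing auxiliary error. That is exactly the alternating concentration method of Subsection~\ref{subsec:alterconcen}.

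However, two points in your sketch do not survive contact with the details. First, the contraction is \emph{not} driven by the $B_{2r}$-versus-$B_r$ geometry; that comparison is used only once, in Lemma~\ref{lem:roughbound}, to seed the iteration. In the paper the iteration is parametrised by multiplicative factors $b_l\in(1/2,1)$ and $b_u\in(1,4)$, and the one-step map is the M\"obius transformation $b_u=\dfrac{1+\xi}{2+\xi-1/b_l}$ (Lemma~\ref{lem:ltoubound}) and symmetrically $b_l'=\dfrac{1+\xi}{2+\xi-1/b_u}$ (Lemma~\ref{lem:utolbound}). The contraction toward the fixed point $b=1$ comes from the explicit computation $1-b_l^{(i+1)}=\dfrac{1-b_l^{(i)}}{\xi(2+\xi)b_l^{(i)}+1}\le\dfrac{1-b_l^{(i)}}{\xi(1+\xi/2)+1}$, which is where the numerator in~(\ref{eqn:c2condition}) comes from. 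The constant $(7+40/\xi)$ you cite is not a gain but a \emph{loss}: it is the factor by which the additive error term $r_l^{(i)}$ blows up per half-iteration, and the final exponent $c_2$ is determined by balancing this growth against the M\"obius contraction---hence the ratio in~(\ref{eqn:c2condition}).

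Second, the centred process $X_t=T_t(u)-(2+\xi)mA_rt$ is not a sub- (or super-) martingale, even on the good event: the conditional increment of $T_t(u)$ carries a multiplicative term $\dfrac{T_t(u)}{b(2+\xi)t}$, so the drift of your $X_t$ contains $\dfrac{X_t}{b(2+\xi)t}$ and not merely an $O(\varepsilon_j mA_r)$ slack. The paper handles this by centring at the \emph{wrong} mean $b_u(2+\xi)mA_rt$ (the equilibrium of the one-sided recursion, not the true mean) and then dividing by the telescoping product $\prod_{j=t_r}^{i-1}\bigl(1+\tfrac{1}{b_l(2+\xi)j}\bigr)\sim(i/t_r)^{1/b_l(2+\xi)}$ to kill the multiplicative drift; the condition~(\ref{eqn:c1condition}) on $c_1$ is exactly what makes both the variance sum and the normalised initial term $\Delta_{t-1}X_{t_r}$ small enough. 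Without this normalisation and the $b_l\mapsto b_u$ recentring, your per-step slack bound does not yield a contraction factor below $1$.
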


Lemma~\ref{lem:exabound} is one of our key technical contributions
in this paper which is interesting by its own. To prove it, we will
need to develop an {\it alternating concentration method}, allowing
us to alternatively and iteratively apply both the submartingale and
supermartingale inequalities to prove a desired concentration
result. The full proof of Lemma 10 is given in
Subsection~\ref{subsec:alterconcen}.

The role of Lemma~\ref{lem:exabound} is to give a good estimation of
$\E\left[\frac{1_{|x_{t+1}-v|\leq r}}{T_t(x_{t+1})}|G_t\right]$ to
analyze the recurrence of $\E[d_k(t)]$. In this subsection, we prove
Theorem~\ref{thm:basedeg} by assuming Lemma~\ref{lem:exabound} as
follows.

\begin{proof}[Proof of Theorem~\ref{thm:basedeg}]
Define $D_k(t):=\{v\in V(G_t)|\deg_{G_t}(v)=k\}$. Then
$d_k(t)=|D_k(t)|$.

The recurrence for the expectation of $d_k(t)$ can be written as
follows.
\begin{eqnarray}
&&\E[d_k(t+1)|G_t]\nonumber\\
& = & d_k(t)+\sum_{v\in D_{k-1}(t)}
\left(m\E\left[\frac{(k-1+\delta)1_{|x_{t+1}-v|\leq r}}{T_t(x_{t+1})}|G_t\right]\right)\nonumber \\
& & -\sum_{v\in
D_{k}(t)}\left(m\E\left[\frac{(k+\delta)1_{|x_{t+1}-v|\leq
r}}{T_t(x_{t+1})}|G_t\right]\right)+O(m\E[\eta_k(G_t,x_{t+1})|G_t])
\enspace, \label{eqn:dkrecur}
\end{eqnarray}
where $\eta_k(G_t,x_{t+1})$ denotes the probability that a parallel
edge from the new vertex $x_{t+1}$ to a vertex of degree no more
than $k$ is created, which is at most
\begin{eqnarray}
{m \choose 2}\sum_{i=m}^k\sum_{v\in
D_i(t)}(i+\delta)^2\left(\frac{1_{|v-x_{t+1}|\leq
r}}{T_t(x_{t+1})}\right)^2 \enspace.\nonumber
\end{eqnarray}

Now for $t\geq t_r$, let $\mathcal{A}_t$ denote the event that
$|T_t(u)-(2+\xi)mA_rt|\leq \frac{1}{(nr^2)^{c_2/2c_0}}mA_rt$. By
Lemma~\ref{lem:exabound}, we have
\begin{eqnarray}
\Pr[\mathcal{A}_t]=1-O(n^{-2})\enspace. \nonumber
\end{eqnarray}

Therefore for $t\geq t_r$,
\begin{eqnarray}
&&\E\left[\sum_{v\in D_k(t)}\frac{(k+\delta)1_{|x_{t+1}-v|\leq r}}{T_t(x_{t+1})}\right]\nonumber\\
&=&\E\left[\sum_{v\in D_k(t)}\frac{(k+\delta)1_{|x_{t+1}-v|\leq
r}}{(2+\xi)mA_rt}\left(1+O(\frac{1}{(nr^2)^{c_2/2c_0}})\right)|\mathcal{A}_t\right]\Pr[\mathcal{A}_t]+O(n^{-2})\nonumber\\
&=&\frac{(k+\delta)}{(2+\xi)mt}\left(1+O(\frac{1}{(nr^2)^{c_2/2c_0}})\right)\E[d_k(t)|\mathcal{A}]\Pr[\mathcal{A}]+O(n^{-2})\nonumber\\
&=&\frac{(k+\delta)}{(2+\xi)mt}\left(1+O(\frac{1}{(nr^2)^{c_2/2c_0}})\right)(\E[d_k(t)]-\E[d_k(t)|\neg\mathcal{A}]\Pr[\neg\mathcal{A}])+O(n^{-2})\nonumber\\
&=&\frac{(k+\delta)\E[d_k(t)]}{(2+\xi)mt}+O(\frac{1}{(nr^2)^{c_2/2c_0}})\enspace.\nonumber
\end{eqnarray}

Similarly, we have
\begin{eqnarray}
&&\E\left[\sum_{v\in D_{k-1}(t)}\frac{(k-1+\delta)1_{|x_{t+1}-v|\leq r}}{T_t(x_{t+1})}\right]\nonumber\\
&=&\frac{(k-1+\delta)\E[d_{k-1}(t)]}{(2+\xi)mt}+O(\frac{1}{(nr^2)^{c_2/2c_0}})\enspace.\nonumber
\end{eqnarray}

The error term can be bounded as follows.
\begin{eqnarray}
&&\E[\eta_k(G_t,x_{t+1})]\nonumber\\
&\leq& {m \choose 2}\E\left[\sum_{i=m}^k\sum_{v\in D_i(t)}(k+\delta)^2\left(\frac{1_{|v-x_{t+1}|\leq r}}{T_t(x_{t+1})}\right)^2\right].\nonumber\\
&\leq& {m \choose 2}\E\left[\sum_{i=m}^k\sum_{v\in D_i(t)}(k+\delta)^2\frac{1}{m^2A_rt^2}\left(1+O(\frac{1}{(nr^2)^{c_2/2c_0}})\right)\right]+O(n^{-2})\nonumber\\
&\leq& O\left(\frac{(k+\delta)^2}{A_rt}\right)+O(n^{-2})
\enspace.\nonumber
\end{eqnarray}

If $k+\delta\leq k_0(t)=(nr^2)^{c_1/2c_0-c_2/4c_0}$, then
$\E[\eta_k(G_t,x_{t+1})]=O(\frac{1}{(\ln n)^{2}(nr^2)^{c_2/2c_0}})$
and $\E[m\eta_k(G_t,x_{t+1})]=O(\frac{1}{(nr^2)^{c_2/2c_0}})$
given the fact that $m=O(\ln^2 n)$.

Let $\bar{d}_k(t):=\E[d_k(t)]$. Now the recurrence can be simplified
as
\begin{eqnarray}
\bar{d}_{k}(t+1)&=&\bar{d}_k(t)-\frac{(k+\delta)\bar{d}_k(t)}{(2+\xi)t}
+\frac{(k-1+\delta)\bar{d}_{k-1}(t)}{(2+\xi)t}\nonumber\\
&&+1_{k=m}+O(\frac{1}{(nr^2)^{c_2/2c_0}})
\enspace.\label{eqn:dk}
\end{eqnarray}

We define a new recurrence related to~(\ref{eqn:dk}). For $j<m$, let
$f_j=0$ and for $j\geq m$, let
\begin{eqnarray}
f_k&=&
\frac{k-1+\delta}{2+\xi}f_{k-1}-\frac{k+\delta}{2+\xi}f_k+1_{d=m}
\enspace,\label{eqn:fk}
\end{eqnarray}
which has solution that $f_m=\frac{2+\xi}{2+\xi+m+\delta}$ and for
$k\geq m+1$,
\begin{eqnarray}
f_k&=&\prod_{j=m+1}^k\frac{j-1+\delta}{2+\xi+j+\delta}f_{m}\nonumber\\
&=&\frac{\Gamma(k+\delta)\Gamma(m+4+\xi+\delta)}{\Gamma(3+\xi+k+\delta)\Gamma(m+1+\delta)}\frac{2+\xi}{2+\xi+m+\delta}\nonumber\\
&=&\frac{\phi_k(m,\delta)}{k^{3+\xi}}\enspace,\nonumber
\end{eqnarray}
where $\phi_k(m,\delta)$ tends to a limit $\phi_{\infty}(m,\delta)$
which depends on $m,\delta$ only as $k\to\infty$.

Now we show that
\begin{eqnarray}
|\bar{d}_k(t)-f_kt|\leq M\left(t_r+\frac{n+Lt}{(nr^2)^{c_2/2c_0}}\right)\enspace.\label{ineqn:degree}
\end{eqnarray}
where $M$ is some large constant and $L$ is the hidden constant in term $O(\frac{1}{(nr^2)^{c_2/2c_0}})$ in Eq.~(\ref{eqn:dk}).

We prove (\ref{ineqn:degree}) by induction.
\begin{enumerate}
\item First note that for $t\leq t_r$, the above relation holds trivially; for $t\geq t_r$ and $k\geq k_0(t)$, the inequality follows from the fact that $\bar{d}_k(t)\leq 2mt/k$.
\item Now assume that $t\geq t_r$ and $k\leq k_0(t)$, we have
\begin{eqnarray}
&&|\bar{d}(k+1)-f_k(t+1)|\nonumber\\
&=& |\bar{d}_k(t)-f_k(t+1)-\frac{(k+\delta)\bar{d}_k(t)}{(2+\xi)t}
+\frac{(k-1+\delta)\bar{d}_{k-1}(t)}{(2+\xi)t}+O(\frac{1}{(nr^2)^{c_2/2c_0}})|\nonumber\\
&=& |\bar{d}_k(t)-f_kt-\left(\frac{k-1+\delta}{2+\xi}f_{k-1}-\frac{k+\delta}{2+\xi}f_k\right)\nonumber\\
&&-\frac{(k+\delta)\bar{d}_k(t)}{(2+\xi)t}
+\frac{(k-1+\delta)\bar{d}_{k-1}(t)}{(2+\xi)t}+O(\frac{1}{(nr^2)^{c_2/2c_0}})|\nonumber\\
&\leq &\left(1-\frac{k+\delta}{(2+\xi)t}\right)|\bar{d}_k(t)-f_kt|+ \frac{k-1+\delta}{(2+\xi)t}|d_{k-1}(t)-f_{k-1}t|+O(\frac{1}{(nr^2)^{c_2/2c_0}})\nonumber\\
&\leq & M\left(t_r+\frac{n+Lt}{(nr^2)^{c_2/2c_0}}\right)+L\frac{1}{(nr^2)^{c_2/2c_0}}\nonumber\\
&\leq & M\left(t_r+\frac{n+L(t+1)}{(nr^2)^{c_2/2c_0}}\right)\enspace.\nonumber
\end{eqnarray}
\end{enumerate}

This completes the induction and the proof of
Theorem~\ref{thm:basedeg}.
\end{proof}

\subsection{Estimation of $T_t(u)$ - Alternating Concentration Analysis}
\label{subsec:alterconcen}

In this subsection, we prove the Alternating Concentration Theorem, that is, Lemma~\ref{lem:exabound}.

As mentioned above, Flaxman et
al.~\cite{FFV07:gpamodel1,FFV07:gpamodel2} and van den
Esker~\cite{Esk08:geometric} introduced a new parameter $\alpha>2$
to facilitate the analysis and they used the traditional coupling
technique to bound $T_t(u)$. In our settings, we do not use the
additional parameter $\alpha$ and we can still get a nice bound.
Our idea is to develop a refined
method based on the recurrence directly implied in the definition of
$T_t(u)$. By using this recurrence, we can start from the weak bound
as given in Lemma~\ref{lem:roughbound}, and iteratively improve both
the upper bound and the lower bound of $T_t(u)$. This improvement can be
done by using the submartingale and supermartingale concentration
inequalities as in the proof of Lemma~\ref{lem:longdegree}. This
allows us to show that the accumulated error in the whole process is
small and therefore guarantees the desired bound.

At first, we show that a lower bound can be achieved from a rough
lower bound on $T_t(u)$.
\begin{lemma}~\label{lem:ltoubound}
Fix $r\geq r_0$. If for any $t\geq t_r$,
\begin{eqnarray}
\Pr[T_t(u)\leq (b_l-\frac{r_l}{(nr^2)^{c_1/2c_0}})(2+\xi)mA_rt]\leq
\epsilon_l\enspace,\label{eqn:dt}
\end{eqnarray}
for some $b_l\in [1/2,1)$ and $r_l=o((nr^2)^{c_1/2c_0})$, then for any
$t\geq t_r$,
\begin{eqnarray}
\Pr[T_t(u)\geq (b_u+\frac{r_u}{(nr^2)^{c_1/2c_0}})(2+\xi)mA_rt]\leq
\epsilon_u\enspace,
\end{eqnarray}
where $b_u=\frac{\xi+1}{2+\xi-\frac{1}{b_l}}\in (1,\infty)$,
$r_u=7+40r_l/\xi$, $\epsilon_u=n\epsilon_l+5n^{-\ln n+1}$ and $c_1$ is some constant satisfying the condition given in Eq.
(\ref{eqn:c1condition}).
\end{lemma}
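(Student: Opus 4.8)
The plan is to exploit the recurrence for $T_t(u)$ implicit in the model definition. When a new vertex $x_{t+1}$ arrives, it lands in $B_r(u)$ with probability $A_r$, contributing $m + \delta = (1+\xi/2)\cdot 2m/\ldots$ — more precisely $m+\delta$ to its own degree-plus-$\delta$ term — and it sends $m$ edges into $B_r(u)$ whenever $x_{t+1} \in B_{2r}(u)$, each landing on a vertex of $B_r(u)$ with probability proportional to $\deg_t(v)+\delta$ over $T_t(x_{t+1})$. Conditioning on the rough lower bound hypothesis~(\ref{eqn:dt}), the expected number of these $m$ edges hitting $B_r(u)$ is roughly $m \cdot \frac{T_t(u)}{T_t(x_{t+1})} \approx m \cdot \frac{T_t(u)}{(2+\xi)mA_rt}$ up to the multiplicative slack $1/b_l$ coming from~(\ref{eqn:dt}) and a $1+o(1)$ factor. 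So I would write, conditioned on the good event $\mathcal{G}$ that~(\ref{eqn:dt}) holds at all relevant times and that $Z_t$ is concentrated (Lemma~\ref{lem:zt}),
\begin{eqnarray}
\E[T_{t+1}(u)\mid \mathcal{F}_t,\mathcal{G}] \;\le\; T_t(u) + (m+\delta)A_r + \frac{T_t(u)}{(2+\xi)A_r t}\cdot A_{2r}\cdot\frac{1}{b_l}\cdot(1+o(1))\enspace.\nonumber
\end{eqnarray}
Using $A_{2r} \sim 4 A_r$ and $\delta = \xi m$, the coefficient of $T_t(u)/t$ becomes $\frac{4}{(2+\xi)b_l}(1+o(1))$; but the relevant in-degree contribution is actually the self-contained one (edges from $B_r(u)$, not $B_{2r}(u)$) which brings the factor $\frac{1}{(2+\xi)b_l}$. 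Solving the associated deterministic fixed-point equation $b_u = \frac{\xi+1}{2}+ \frac{b_u}{(2+\xi)b_l}$ — equivalently $b_u(1 - \frac{1}{(2+\xi)b_l}) = \frac{\xi+1}{2}$, i.e. $b_u = \frac{\xi+1}{2+\xi-1/b_l}$ after clearing — gives exactly the claimed $b_u = \frac{\xi+1}{2+\xi-1/b_l}$, and one checks $b_u \in (1,\infty)$ since $b_l \in [1/2,1)$ forces $2+\xi - 1/b_l \in (\xi, \xi+1]$.

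**The martingale step.** Having the right centering, I would set $X_\tau = T_\tau(u) - (b_u + \frac{r_u}{(nr^2)^{c_1/2c_0}})(2+\xi)mA_r\tau$ for $\tau \ge t_r$ (with $X_{t_r}$ pinned to the rough upper bound of Lemma~\ref{lem:roughbound}, which is $\le 4(1+o(1))(2+\xi)mA_r t_r$, comfortably below the target line because $b_u$'s coefficient times the linear growth dominates for $\tau$ large while the additive slack $r_u \ge 7$ absorbs the start). The choice $r_u = 7 + 40 r_l/\xi$ is tuned so that the bound holds already at $\tau = t_r$ and so that the per-step error from the $1/b_l$-slack accumulates into something below $\frac{r_u}{(nr^2)^{c_1/2c_0}}(2+\xi)mA_r\tau$. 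By the supermartingale drift inequality above, $X_{t_r},\dots,X_t$ is a supermartingale with failure probability $\Pr[\neg\mathcal{G}] \le \epsilon_l \cdot (\text{number of time steps}) + (\text{Chernoff error for } Z) $; each step's variance is $\Var[T_{\tau+1}-T_\tau \mid \mathcal{F}_\tau] = O(m^2 A_r)$ (the increment is at most $m+\delta = O(m)$ in absolute value, times an indicator of probability $O(A_r)$), and the increment is bounded by $M = O(m)$. Then I would apply Lemma~\ref{lem:submar} (its supermartingale twin) with $\lambda$ chosen of order $\sqrt{\ln n \cdot m^2 A_r t}$, which is $o\big(\frac{r_u}{(nr^2)^{c_1/2c_0}}(2+\xi)mA_r t\big)$ precisely when $t \ge t_r$ and $c_1$ satisfies~(\ref{eqn:c1condition}) — this is where that condition enters, ensuring $(nr^2)^{c_1/2c_0} = o(\sqrt{nr^2 \cdot (\ldots)})$-type comparisons go the right way and that $A_r t_r \sim 3(\ln n)^2 (nr^2)^{c_1/c_0}$ is large enough for concentration. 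The union bound over $u$ in an $r$-net of $S$ (size $O(1/A_r) = \mathrm{poly}(n)$) and over $t$ turns the $n^{-\ln n}$-type tail into $\epsilon_u = n\epsilon_l + 5 n^{-\ln n + 1}$.

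**Main obstacle.** The delicate part is bookkeeping the \emph{accumulation} of the multiplicative error $1/b_l$ across $\sim n$ time steps without it blowing up: naively, a drift coefficient $1 + \frac{c}{t}$ iterated from $t_r$ to $n$ multiplies the error by $(n/t_r)^c$, so one must verify that the exponent $c = \frac{1}{(2+\xi)b_l} < 1$ (which holds since $(2+\xi)b_l \ge (2+\xi)/2 > 1$), making the product only polynomially large and dominated by the linear term $b_u (2+\xi)mA_r t$. Equivalently, the fixed point $b_u$ is \emph{attracting}, and the transient decays like $t^{-(1-c)}$; I would make this rigorous by the same "define $X_\tau$ relative to the target affine function and show it stays a supermartingale with controlled variance" trick used in Lemma~\ref{lem:longdegree}, checking the affine target dominates the iterated drift. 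A secondary subtlety is that the hypothesis~(\ref{eqn:dt}) is needed at \emph{all} times $\tau \in [t_r, t]$ simultaneously (to lower-bound $T_\tau(x_{\tau+1})$ in the denominator at each step), not just at time $t$; this is why the failure probability picks up the factor $n$ in $\epsilon_u = n\epsilon_l + \ldots$. Once these are in hand, reading off $b_u, r_u, \epsilon_u$ is mechanical, and Lemma~\ref{lem:ltoubound} follows; it will then be iterated (alternating with the companion lower-bound lemma) to drive $b_l \to 1$, $b_u \to 1$ and sharpen the slack from $c_1$ to $c_2$, yielding Lemma~\ref{lem:exabound}.
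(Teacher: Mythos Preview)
Your overall architecture is correct: the recurrence, the fixed-point computation $b_u=\frac{\xi+1}{2+\xi-1/b_l}$, the identification of the multiplicative drift $1+\frac{1}{b_l(2+\xi)t}$ as the main obstacle, and the union-bound accounting for $\epsilon_u$ all match the paper. However, your martingale step has a genuine gap. The process you define, $X_\tau=T_\tau(u)-\big(b_u+\frac{r_u}{(nr^2)^{c_1/2c_0}}\big)(2+\xi)mA_r\tau$, is \emph{not} a supermartingale conditioned on $\mathcal{G}$: the recurrence yields only $\E[X_{\tau+1}\mid\mathcal{F}_\tau,\mathcal{G}]\le\big(1+\frac{1}{b_l(2+\xi)\tau}\big)X_\tau$, which goes the wrong way whenever $X_\tau>0$, and random fluctuations can certainly push $X_\tau$ above zero even if you start below. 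Appealing to the trick of Lemma~\ref{lem:longdegree} does not help, since there the drift is purely additive; here it is multiplicative, and no affine target can absorb it.

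The paper's fix is to \emph{normalize} by the accumulated multiplicative factor: set $\Delta_{i-1}=\prod_{j=t_r}^{i-1}\big(1+\frac{1}{b_l(2+\xi)j}\big)\sim(i/t_r)^{1/(b_l(2+\xi))}$ and work with $X_i=\big(T_i(u)-\text{target}(i)\big)/\Delta_{i-1}$. This makes $\{X_i\}$ a genuine submartingale in the sense of Lemma~\ref{lem:submar}; the division by $\Delta_{i-1}^2$ also makes the variance sum bounded by the constant $(mA_rt_r/\ln n)^2$, and it is the \emph{first} inequality in~(\ref{eqn:c1condition}) that is used here. One then takes $\lambda=2mA_rt_r$, constant in $t$, not $\sqrt{t}$-sized as you propose. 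Undoing the normalization at the end multiplies by $\Delta_{t-1}\sim(t/t_r)^{1/(b_l(2+\xi))}$, and the \emph{second} inequality in~(\ref{eqn:c1condition}) is precisely what guarantees $\Delta_{t-1}\cdot 7mA_rt_r\le\frac{7}{(nr^2)^{c_1/2c_0}}mA_rt$; this is the origin of the constant $7$ in $r_u=7+40r_l/\xi$. A minor correction: the factor $n$ in $\epsilon_u=n\epsilon_l+5n^{-\ln n+1}$ comes from the union over the at most $n$ time steps at which you need the lower bound on $T_\tau(x_{\tau+1})$ in the denominator, not from a spatial net over points $u$.
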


\begin{proof}[Proof of Lemma~\ref{lem:ltoubound}]
We will mainly use the following recurrence.
\begin{eqnarray}
\E[T_{t+1}(u)|G_t]&=&T_t(u)+m(1+\xi)\E[1_{|x_{t+1}-u|\leq r}|G_t]\nonumber\\
&&+\sum_{v\in V_t}m\Pr[y_i^{t+1}=v|G_t]1_{|u-v|\leq
r}\enspace,\nonumber
\end{eqnarray}
where
\begin{eqnarray}
\Pr[y_i^{t+1}=v|G_t]=\E\left[\frac{(\deg_t(v)+\delta)1_{|x_{t+1}-v|\leq
r}}{T_t(x_{t+1})}|G_t\right]\enspace.\nonumber
\end{eqnarray}

Let $\mathcal{G}$ denote the event that for all $t\geq t_r$, the
following inequalities hold: $T_t(u)\geq (b_l-\frac{r_l}{(nr^2)^{c_1/2c_0}})(2+\xi)mA_rt$ and $(1-\frac{1}{(nr^2)^{c_1/2c_0}})(1+\xi)mA_rt\leq T_t(u)\leq 4(2+\xi)mA_r(1+\frac{1}{(nr^2)^{c_1/2c_0}})$. Then by Lemma~\ref{lem:roughbound}
and the bound given in~(\ref{eqn:dt}), $\Pr[\neg\mathcal{G}]\leq n\epsilon_l+4n^{-\ln
n+1}$. Conditioned on $\mathcal{G}$, for $t\geq t_r$, we have
\begin{eqnarray}
\Pr[y_i^{t+1}=v|G_t, \mathcal{G}]&\leq&
\E\left[\frac{(\deg_t(v)+\delta)1_{v\in
B_r(x_{t+1})}}{(b_l-\frac{r_l}{(nr^2)^{c_1/2c_0}})
(2+\xi)mA_rt}|G_t, \mathcal{G}\right]\nonumber\\
&=& \frac{\deg_t(v)+\delta}{(b_l-\frac{r_l}{(nr^2)^{c_1/2c_0}})(2+\xi)mt}\nonumber\\
&\leq& \frac{\deg_t(v)+\delta}{b_l(2+\xi)mt}(1+\frac{4r_l}{(nr^2)^{c_1/2c_0}})\enspace.\nonumber
\end{eqnarray}

Therefore,
\begin{eqnarray}
\E[T_{t+1}(u)|G_t,\mathcal{G}]&\leq&
T_t(u)+m(1+\xi)A_r+\frac{1}{b_l(2+\xi)t}
(1+\frac{4r_l}{(nr^2)^{c_1/2c_0}})T_t(u)\nonumber\\
&\leq& (1+\frac{1}{b_l(2+\xi)t})T_t(u)+(\xi+1+\frac{40r_l}{(nr^2)^{c_1/2c_0}})mA_r\enspace,\nonumber
\end{eqnarray}
where the second inequality uses the rough upper bound on $T_t(u)$
in Lemma~\ref{lem:roughbound}.

Let $b_u=\frac{\xi+1}{2+\xi-\frac{1}{b_l}}$, and $s=40r_l/\xi$, then
\begin{eqnarray}
&&\E[T_{t+1}(u)|G_t,\mathcal{G}]-(b_u+\frac{s}{(nr^2)^{c_1/2c_0}})(2+\xi)mA_r(t+1)\nonumber\\
&\leq &(1+\frac{1}{b_l(2+\xi)t})\Big(T_t(u)-(b_u+\frac{s}{(nr^2)^{c_1/2c_0}})(2+\xi)mA_rt\Big)\nonumber\\
&&+\Big(\frac{b_u}{b_l}+\xi+1-b_u(2+\xi)+(2s+40r_l-s(2+\xi))\frac{1}{(nr^2)^{c_1/2c_0}}\Big)mA_r\nonumber \\
%&&+\Big((1+\frac{1}{b_l(2+\xi)t})\frac{1}{t^x}-\frac{1}{(t+1)^x}\Big)c_3't_0^x(2+\xi)mA_rt\nonumber\\
&\leq&(1+\frac{1}{b_l(2+\xi)t})\left(T_t(u)-(b_u+\frac{s}{(nr^2)^{c_1/2c_0}})(2+\xi)mA_rt\right)\enspace.\label{ineqn:tt}
\end{eqnarray}

Now define
\begin{displaymath}
X_i=\left\{ \begin{array}{ll}
\frac{T_i(u)-(b_u+\frac{s}{(nr^2)^{c_1/2c_0}})(2+\xi)mA_ri}{\prod_{j=t_r}^{i-1}(1+\frac{1}{b_l(2+\xi)j})} & \textrm{for $i>t_r$}\enspace ,\\
T_{t_r}(u)-(b_u+\frac{s}{(nr^2)^{c_1/2c_0}})(2+\xi)mA_rt_r & \textrm{for
$i=t_r$}\enspace.
\end{array}
\right.
\end{displaymath}

From inequality~(\ref{ineqn:tt}), we know that
$\E[X_i|G_{i-1},\mathcal{G}]\leq X_{i-1}$ for $t_r<i\leq t$. Let
$\Delta_i=\prod_{j=t_r}^{i}(1+\frac{1}{b_l(2+\xi)j})\sim
(\frac{i}{t_r})^{1/b_l(2+\xi)}$. We have that
\begin{eqnarray}
X_{i}-\E[X_{i}|G_{i-1},\mathcal{G}]=\frac{T_{i}(u)-\E[T_{i}(u)|G_{i-1},\mathcal{G}]}{\Delta_{i-1}}\leq
(2+\xi)m\enspace, \nonumber
\end{eqnarray}
and
\begin{eqnarray}
\Var[X_{i}|G_{i-1},\mathcal{G}]&=&\frac{\Var[T_i(u)|G_{i-1},\mathcal{G}]}{\Delta_{i-1}^2}\nonumber\\
&\leq& \frac{\E[(T_i(u)-T_{i-1}(u))^2|G_{i-1},\mathcal{G}]}{\Delta_{i-1}^2}\nonumber\\
&\leq& (2+\xi)m\frac{\frac{T_{i-1}(u)}{b_l(2+\xi)(i-1)}+(\xi+1+\frac{40r_l}{(nr^2)^{c_1/2c_0}})mA_r}{\Delta_{i-1}^2}\nonumber\\
&\leq& \frac{(\xi+3)^2m^2A_r}{\Delta_{i-1}^2}\enspace.\nonumber
\end{eqnarray}

Therefore, the sequence $X_{t_r},\cdots, X_t$ satisfies the
conditions in Lemma~\ref{lem:submar} with $\Pr[\neg\mathcal{G}]\leq
n\epsilon+4n^{-\ln n+1}$ and
\begin{eqnarray}
&&\sum_{i=t_r+1}^t\Var[X_{i}|G_{i-1},\mathcal{G}]\nonumber\\
&\leq& \sum_{i=t_r+1}^t\frac{(\xi+3)^2m^2A_r}{\Delta_{i-1}^2}\nonumber\\
&\leq& \sum_{i=t_r+1}^t\frac{(\xi+3)^2m^2A_rt_r^{2/b_l(2+\xi)}}{i^{2/b_l(2+\xi)}}\label{eqn:var}\\
%&\leq& c(\xi)m^2A_rt_u^{2/b_l(2+\xi)}(t^{1-2/b_l(2+\xi)}+t_u^{1-2/b_l(2+\xi)}+\ln t)\nonumber
&\leq& (\frac{mA_rt_r}{\ln n})^2\enspace.\nonumber
\end{eqnarray}

The last inequality can be seen by using the fact that $A_rt_r\sim3(\ln n)^2(nr^2)^{c_1/c_0}$, the assumption that $(c_0-c_1-1)(1-1/(\xi+2))<
c_1$. Specifically,
\begin{enumerate}
\item if $2/b_l(\xi+2)=1$, then
\begin{eqnarray}
\sum_{i=t_r+1}^t\frac{(\xi+3)^2m^2A_rt_r^{2/b_l(2+\xi)}}{i^{2/b_l(2+\xi)}}&\leq& O(m^2A_rt_r\ln(t/t_r))= O(\frac{m^2A_r^2t_r^2}{A_rt_r/\ln \ln n}) \leq (\frac{mA_rt_r}{\ln n})^2\enspace.\nonumber
\end{eqnarray}
\item if
$2/b_l(\xi+2)>1$, then
\begin{eqnarray}
\sum_{i=t_r+1}^t\frac{(\xi+3)^2m^2A_rt_r^{2/b_l(2+\xi)}}{i^{2/b_l(2+\xi)}}&\leq& O(m^2A_rt_r)= O(\frac{m^2A_r^2t_r^2}{A_rt_r}) \leq (\frac{mA_rt_r}{\ln n})^2\enspace.\nonumber
\end{eqnarray}
\item if $2/b_l(\xi+2)<1$, then
\begin{eqnarray}
&&\sum_{i=t_r+1}^t\frac{(\xi+3)^2m^2A_rt_r^{2/b_l(2+\xi)}}{i^{2/b_l(2+\xi)}}\leq O(m^2A_rt(\frac{t_r}{t})^{2/b_l(\xi+2)})= O(\frac{m^2A_r^2t_r^2}{A_rt_r(\frac{t_r}{t})^{1-2/b_l(\xi+2)}})\nonumber\\
&\leq&\frac{m^2A_r^2t_r^2}{3(\ln n)^{2+2(1-\frac{2}{b_l(\xi+2)})} (nr^2)^{c_1/c_0+(c_1/c_0-1)(1-\frac{2}{b_l(\xi+2)})}}\nonumber\\
&\leq&\frac{m^2A_r^2t_r^2}{3(\ln n)^{2+2(1-\frac{2}{b_l(\xi+2)})} (nr^2)^{c_1/c_0+(c_1/c_0-1)(1-\frac{2}{(\xi+2)})}}\leq (\frac{mA_rt_r}{\ln n})^2\enspace.\nonumber
\end{eqnarray}
\end{enumerate}

If we let $\lambda=2mA_rt_r$, then using the submartingale
concentration inequality, we have
\begin{eqnarray}
&&\Pr[X_t\geq X_{t_r}+\lambda]\nonumber\\
&\leq& e^{-\frac{\lambda^2}{2\sum_{j=t_r+1}^t\Var[X_{i}|G_{i-1},\mathcal{G}]+2(2+\xi)m\lambda/3}}+\Pr[\neg\mathcal{G}]\nonumber\\
&\leq& n\epsilon_l+5n^{-\ln{n}+1}\enspace.\nonumber
\end{eqnarray}

On the other hand, we have that $X_{t_r}\leq 5mA_rt_r$ conditioned on $\mathcal{G}$. Thus, $\Delta_{t-1}(X_{t_r}+\lambda)\leq
7(\frac{t}{t_r})^{1/b_l(2+\xi)}mA_rt_r=7(\frac{t_r}{t})^{1-1/b_l(2+\xi)}mA_rt\leq
\frac{7}{(nr^2)^{c_1/2c_0}}mA_rt$, where the last inequality follows
from the assumption that $(2c_0-2c_1-2)(1-2/(2+\xi))> c_1$.
Therefore,
\begin{eqnarray}
&&\Pr[T_t(u)\geq (b_u+\frac{s+7}{(nr^2)^{c_1/2c_0}})(2+\xi)mA_rt]\nonumber \\
&\leq&\Pr[\frac{T_t(u)-(b_u+\frac{s}{(nr^2)^{c_1/2c_0}})(2+\xi)mA_rt}{\Delta_{t-1}}\geq X_{t_r}+\lambda]\nonumber\\
&\leq&\Pr[X_t\geq X_{t_r}+\lambda]\nonumber\\
&\leq& n\epsilon_l+5n^{-\ln{n}+1}\enspace.\nonumber
\end{eqnarray}

The proof completes by letting $r_u=7+40r_l/\xi$ and
$\epsilon_u=n\epsilon_l+5n^{-\ln{n}+1}$.
\end{proof}

Similarly, from a rough upper bound, we can obtain an upper bound on
$T_t(u)$.
\begin{lemma}~\label{lem:utolbound}
Fix $r\geq r_0$. If for any $t\geq t_r$,
\begin{eqnarray}
\Pr[T_t(u)\geq (b_u+\frac{r_u}{(nr^2)^{c_1/2c_0}})(2+\xi)mA_rt]\leq
\epsilon_u,
\end{eqnarray}
for some $b_u\in (1,4)$ and $r_u=o((nr^2)^{c_1/2c_0})$, then for any $t\geq t_0$,
\begin{eqnarray}
\Pr[T_t(u)\leq (b_l-\frac{r_l}{(nr^2)^{c_1/2c_0}})(2+\xi)mA_rt]\leq
\epsilon_l,
\end{eqnarray}
where $b_l=\frac{\xi+1}{2+\xi-\frac{1}{b_u}}\in (1/2,1)$,
$r_l=7+40r_u/\xi$, $\epsilon_l=n\epsilon_u+5n^{-\ln n+1}$, and
$c_1$ is some constant satisfying the condition given in Eq.
(\ref{eqn:c1condition}).
\end{lemma}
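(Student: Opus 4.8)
The plan is to run the supermartingale mirror of the proof of Lemma~\ref{lem:ltoubound}, interchanging the roles of the upper and the lower bound on $T_t(u)$. First I would start from the same recurrence
\begin{eqnarray}
\E[T_{t+1}(u)|G_t]&=&T_t(u)+m(1+\xi)\E[1_{|x_{t+1}-u|\leq r}|G_t]\nonumber\\
&&+\sum_{v\in V_t}m\Pr[y_i^{t+1}=v|G_t]1_{|u-v|\leq r}\enspace,\nonumber
\end{eqnarray}
and let $\mathcal{G}$ be the event that, for every $t\geq t_r$, the two rough bounds of Lemma~\ref{lem:roughbound} together with the hypothesised upper bound $T_t(u)\leq(b_u+\frac{r_u}{(nr^2)^{c_1/2c_0}})(2+\xi)mA_rt$ all hold, so that $\Pr[\neg\mathcal{G}]\leq n\epsilon_u+4n^{-\ln n+1}$. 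Conditioned on $\mathcal{G}$ the \emph{upper} bound on $T_t(x_{t+1})$ now yields a \emph{lower} bound
\[
\Pr[y_i^{t+1}=v|G_t,\mathcal{G}]\geq\frac{\deg_t(v)+\delta}{b_u(2+\xi)mt}\left(1-O\left(\frac{r_u}{(nr^2)^{c_1/2c_0}}\right)\right)\enspace,
\]
which combined with the rough lower bound $T_t(u)\geq(1-o(1))(1+\xi)mA_rt$ gives
\[
\E[T_{t+1}(u)|G_t,\mathcal{G}]\geq\left(1+\frac{1}{b_u(2+\xi)t}\right)T_t(u)+\left(\xi+1-O\left(\frac{r_u}{(nr^2)^{c_1/2c_0}}\right)\right)mA_r\enspace.
\]
Setting $b_l=\frac{\xi+1}{2+\xi-1/b_u}$ and $s=40r_u/\xi$, one checks that $b_u\in(1,4)$ forces $2+\xi-1/b_u\in(1+\xi,\xi+7/4)$ and hence $b_l\in(1/2,1)$ as claimed, while the identity $b_l(2+\xi)-b_l/b_u=\xi+1$ makes all the remaining constant terms cancel.

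Next I would rescale. With $\Delta_{i-1}=\prod_{j=t_r}^{i-1}(1+\frac{1}{b_u(2+\xi)j})\sim(i/t_r)^{1/(b_u(2+\xi))}$, put $X_i=\left(T_i(u)-(b_l-\frac{s}{(nr^2)^{c_1/2c_0}})(2+\xi)mA_ri\right)/\Delta_{i-1}$ for $i>t_r$ and $X_{t_r}=T_{t_r}(u)-(b_l-\frac{s}{(nr^2)^{c_1/2c_0}})(2+\xi)mA_rt_r$. The inequality just displayed says precisely that $X_{t_r},\ldots,X_t$ is a supermartingale conditioned on $\mathcal{G}$, and, as in the proof of Lemma~\ref{lem:longdegree}, one has $X_i-\E[X_i|G_{i-1},\mathcal{G}]\geq-(2+\xi)m$ and $\Var[X_i|G_{i-1},\mathcal{G}]\leq(\xi+3)^2m^2A_r/\Delta_{i-1}^2$. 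Summing the variances is where the book-keeping differs a little from Lemma~\ref{lem:ltoubound}: since $b_u>1$ one always has $2/(b_u(2+\xi))<1$, so only the last of the three cases there arises, and using $A_rt_r\sim3(\ln n)^2(nr^2)^{c_1/c_0}$ together with the left-hand inequality of~(\ref{eqn:c1condition}) one obtains $\sum_{i=t_r+1}^t\Var[X_i|G_{i-1},\mathcal{G}]\leq(mA_rt_r/\ln n)^2$. Taking $\lambda=2mA_rt_r$ and applying the supermartingale version of Lemma~\ref{lem:submar} then gives $\Pr[X_t\leq X_{t_r}-\lambda]\leq n\epsilon_u+5n^{-\ln n+1}$. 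Finally I would undo the rescaling: conditioned on $\mathcal{G}$ one has $X_{t_r}\geq-(2+\xi)mA_rt_r$, and, since $t_r$ is chosen so that $A_rt_r\sim3(\ln n)^2(nr^2)^{c_1/c_0}$, the right-hand inequality of~(\ref{eqn:c1condition}) makes $\Delta_{t-1}(X_{t_r}-\lambda)\geq-\frac{7}{(nr^2)^{c_1/2c_0}}mA_rt$; hence $T_t(u)\geq(b_l-\frac{s+7}{(nr^2)^{c_1/2c_0}})(2+\xi)mA_rt$ off an event of probability at most $n\epsilon_u+5n^{-\ln n+1}$, which is the asserted bound with $r_l=7+40r_u/\xi$ and $\epsilon_l=n\epsilon_u+5n^{-\ln n+1}$.

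The step I expect to be the real obstacle is the same one that is delicate in Lemma~\ref{lem:ltoubound}: keeping the accumulated deviation $\Delta_{t-1}(X_{t_r}-\lambda)$ within $O\left(\frac{1}{(nr^2)^{c_1/2c_0}}mA_rt\right)$ \emph{simultaneously for all} $t\geq t_r$ (in particular near the lower end of the range, where the rescaled walk has barely had time to concentrate). This uniform control is precisely what the two inequalities of~(\ref{eqn:c1condition}) are engineered to buy — the left one to dominate the variance sum, the right one to bound the growth of $\Delta_{t-1}$ — and one must also check that the systematic sign reversal leaves $b_l,r_l,\epsilon_l$ in a shape that can be fed straight back into Lemma~\ref{lem:ltoubound} when the two lemmas are alternated in Subsection~\ref{subsec:alterconcen}. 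Everything else parallels the preceding lemma essentially verbatim.
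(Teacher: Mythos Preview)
Your plan is essentially the paper's own proof: set up the good event, derive the supermartingale recurrence with $b_l=\frac{\xi+1}{2+\xi-1/b_u}$ and $s'=40r_u/\xi$, rescale by $\Delta_{i-1}\sim(i/t_r)^{1/(b_u(2+\xi))}$, and apply the supermartingale form of Lemma~\ref{lem:submar}; your remark that only the third variance case arises because $b_u>1$ is correct and slightly sharper than what the paper writes. The one place your bookkeeping diverges is the choice of $\lambda$ and the lower bound on $X_{t_r}$: the paper takes $\lambda'=\tfrac{29}{4}mA_rt_r$ together with $X'_{t_r}\geq\tfrac14mA_rt_r$ so that $X'_{t_r}-\lambda'\geq-7mA_rt_r$ exactly, whereas your $\lambda=2mA_rt_r$ and $X_{t_r}\geq-(2+\xi)mA_rt_r$ give $X_{t_r}-\lambda\geq-(4+\xi)mA_rt_r$, which overshoots the constant $7$ in $r_l=7+40r_u/\xi$ once $\xi>3$ --- harmless for the method, but you would need to tweak $\lambda$ (or the constant in $r_l$) to match the stated lemma verbatim.
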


\begin{proof} The proof here is similar to proof of Lemma~\ref{lem:ltoubound}. Note that we should instead use the supermartingale concentration inequality and let $\mathcal{G'}$ denote the good event defined similar to $\mathcal{G}$ in the above proof, which will lead to the following recurrence.
\begin{eqnarray}
\E[T_{t+1}(u)|G_t,\mathcal{G'}]&\geq&
T_t(u)+m(1+\xi)A_r+\frac{1}{b_u(2+\xi)t}
(1-\frac{r_u}{(nr^2)^{c_1/2c_0}})T_t(u)\nonumber\\
&\geq& (1+\frac{1}{b_u(2+\xi)t})T_t(u)+(\xi+1-\frac{5r_u}{(nr^2)^{c_1/2c_0}})mA_r\enspace. \nonumber
\end{eqnarray}

Let $b_l=\frac{\xi+1}{2+\xi-\frac{1}{b_u}}$, and $s'=40r_u/\xi$, then
\begin{eqnarray}
&&\E[T_{t+1}(u)|G_t,\mathcal{G'}]-(b_l-\frac{s'}{(nr^2)^{c_1/2c_0}})(2+\xi)mA_r(t+1)\nonumber\\
&\geq &(1+\frac{1}{b_u(2+\xi)t})\Big(T_t(u)-(b_l-\frac{s'}{(nr^2)^{c_1/2c_0}})(2+\xi)mA_rt\Big)\nonumber\\
&&+\Big(\frac{b_l}{b_u}+\xi+1-b_l(2+\xi)+(-s'-5r_u+s'(2+\xi))\frac{1}{(nr^2)^{c_1/2c_0}}\Big)mA_r\nonumber \\
%&&+\Big((1+\frac{1}{b_l(2+\xi)t})\frac{1}{t^x}-\frac{1}{(t+1)^x}\Big)c_3't_0^x(2+\xi)mA_rt\nonumber\\
&\geq&(1+\frac{1}{b_u(2+\xi)t})\left(T_t(u)-(b_l-\frac{s'}{(nr^2)^{c_1/2c_0}})(2+\xi)mA_rt\right)\enspace.\label{ineqn:tt}
\end{eqnarray}

We then define the corresponding supermartingale $X'_{t_r},\cdots, X'_t$ using the above inequality. In this case, we will also use the conditions~(\ref{eqn:c1condition}) on the constants $c_0$ and $c_1$. Then by setting $\lambda'=\frac{29}{4}mA_rt$, where $\lambda'$ corresponds to the parameter $\lambda$ in Lemma~\ref{lem:ltoubound}, and using $X'_{t_r}\geq \frac{1}{4}mA_rt_r$, we will get
\begin{eqnarray}
&&\Pr[T_t(u)\leq (b_l-\frac{s'+7}{(nr^2)^{c_1/2c_0}})(2+\xi)mA_rt]\nonumber \\
&\leq&\Pr[\frac{T_t(u)-(b_l-\frac{s'}{(nr^2)^{c_1/2c_0}})(2+\xi)mA_rt}{\Delta_{t-1}}\leq X'_{t_r}-\lambda']\nonumber\\
&\leq&\Pr[X'_t\leq X'_{t_r}-\lambda']\nonumber\\
&\leq& n\epsilon_u+5n^{-\ln{n}+1}\enspace.\nonumber
\end{eqnarray}
and complete the proof by letting $r_l=7+40r_u/\xi$ and $\epsilon_l=n\epsilon_u+5n^{-\ln n+1}$.
\end{proof}

 Now we are ready to prove
Lemma~\ref{lem:exabound}. Intuitively, we will iteratively apply the
above two lemmas and show that if we start with a rough lower bound
$l_1$, then by Lemma~\ref{lem:ltoubound}, we can get an upper bound
$u$, from which we can again get a new lower bound $l_2$ by
Lemma~\ref{lem:utolbound}. We prove that $l_2>l_1$, which means that
we get a better lower bound in every iteration. The same holds for
the upper bound.

\begin{proof}[Proof of Lemma~\ref{lem:exabound}]
If $\frac{\xi+1}{2+\xi-\frac{2+\xi}{1+\xi}}>4$, then we start our
iterative process from the rough upper bound in
Lemma~\ref{lem:roughbound}. Otherwise, we start the process from the
rough lower bound.

Assume we start from the rough lower bound, and the case of starting
from the rough upper bound is similar. By
Lemma~\ref{lem:roughbound}, we know that for all $t\geq t_r$,
$T_t(u)\geq(1-\frac{1}{(nr^2)^{c_1/2c_0}})(1+\xi)mA_rt$ with probability
at least $1-4n^{-\ln n}$. We define the start point of our iterative
process by letting $b_l^{(1)}=\frac{1+\xi}{2+\xi}\in (1/2, 1)$,
$r_l^{(1)}=1$, $\epsilon_l^{(1)}=5n^{-\ln n}$.

For $i\geq 1$, assume that we have that $T_t(u)\geq
(b_l^{(i)}-\frac{r_l^{(i)}}{(nr^2)^{c_1/2c_0}})(2+\xi)mA_rt$ with error
probability $\epsilon_{l}^{(i)}$ for any $t\geq t_r$. Now we
substitute the corresponding parameters in Lemma~\ref{lem:ltoubound}
to give an upper bound that $T_t(u)\leq
(b_{u}^{(i)}+\frac{r_u^{(i)}}{(nr^2)^{c_1/2c_0}})(2+\xi)mA_rt$ for all
$t\geq t_r$ with error probability $\epsilon_{u}^{(i)}$, where
$b_{u}^{(i)}=\frac{1+\xi}{2+\xi-\frac{1}{b_{l}^{(i)}}}\in (1, 4]$,
$r_{u}^{(i)}=(7+40/\xi)r_l^{(i)}\geq 7+40r_l^{(i)}/\xi$,
$\epsilon_{u}^{(i)}=n\epsilon_{l}^{(i)}+5n^{-\ln n+1}$.

Again we substitute the corresponding parameters in
Lemma~\ref{lem:utolbound} to give an improved lower bound that
$T_t(u)\geq (b_l^{(i+1)}-\frac{r_l^{(i+1)}}{(nr^2)^{c_1/2c_0}})(2+\xi)mA_rt$ for all $t\geq t_r$ with error
$\epsilon_l^{(i+1)}$, where
$b_{l}^{(i+1)}=\frac{1+\xi}{2+\xi-\frac{1}{b_u^{(i)}}}\in (1/2, 1)$,
$r_{l}^{(i+1)}=(7+40/\xi)r_u^{(i)}\geq 7+40r_{u}^{(i)}/\xi$,
$\epsilon_{l}^{(i+1)}=n\epsilon_{u}^{(i)}+5n^{-\ln n+1}$.

Let $C(\xi)=7+40/\xi$. Then $r_{l}^{(i+1)}=C(\xi)^2r_l^{(i)}$ and $\epsilon_{l}^{(i+1)}\leq n^2\epsilon_l^{(i)}+10n^{-\ln n+2}$.

Now we show that for every $i$, $b_{l}^{(i+1)}$ is strictly greater
than $b_{l}^{(i)}$, i.e., the process gives better lower bound after
every  two consecutive steps. Then by the fact that $b_{l}^{(i)}<1$,
we have that $\{b_{l}^{(i)}\}_{i\geq 1}$ converges to $1$.
Similarly, it can be shown that the procedure gives better upper
bound; namely, $\{b_{u}^{(i)}\}_{i\geq 1}$ is a decreasing sequence
which converges to $1$. In the following, we actually prove a
stronger result that after each iteration, the distance between
$b_l^{(i)}$ and $1$ decreases by a multiple factor, which guarantees
that the $\{b_{l}^{(i)}\}_{i\geq 1}$ converges \textit{quickly} to
$1$.

We calculate the distance between $b_{l}^{(i+1)}$ and $1$, which
gives that
\begin{eqnarray}
1-b_{l}^{(i+1)}&=&1-\frac{1+\xi}{2+\xi-\frac{1}{b_{u}^{(i)}}}\nonumber\\
&=&1-\frac{1+\xi}{2+\xi-\frac{1}{\frac{1+\xi}{2+\xi-\frac{1}{b_{l}^{(i)}}}}}\nonumber\\
&=&\frac{1-b_{l}^{(i)}}{\xi(2+\xi)b_{l}^{(i)}+1}\nonumber\\
&\leq&\frac{1-b_{l}^{(i)}}{\xi(1+\xi/2)+1}.\nonumber
\end{eqnarray}

Therefore, the sequence $\{1-b_{l}^{(i)}\}_{i\geq 1}$ decreases by a
multiple factor at least $\frac{1}{\xi(1+\xi/2)+1}$ at each step. On the other hand, since
$T_t(u)\geq
[1-(1-b_l^{(i)})-\frac{r_l^{(i)}}{(\ln n)^{c_1}}](2+\xi)mA_rt$, the best bound is determined by the
maximum of $\frac{r_l^{(i)}}{(\ln n)^{c_1}}$ and $1-b_l^{(i)}$, which is at most $\frac{1/2}{(\xi(1+\xi/2)+1)^i}$. We terminate the iteration at the step $k_0=\lceil\frac{(c_1/2c_0)\ln (nr^2)}{\ln (C(\xi)^2(\xi(1+\xi/2)+1))}\rceil\leq \frac{\ln n}{4}$, in which case $\frac{1/2}{(\xi(1+\xi/2)+1)^{k_0}}\leq\frac{r_l^{(k_0)}}{(nr^2)^{c_1/2c_0}}=\frac{C(\xi)^{2k_0}}{(nr^2)^{c_1/2c_0}}$, and
\begin{eqnarray}
&&\Pr[T_t(u)\leq (1-\frac{1}{(nr^2)^{c_2/2c_0}})(2+\xi)mA_rt]\nonumber\\
&\leq& \Pr[T_t(u)\leq (1-\frac{2C(\xi)^{2k_0}}{(nr^2)^{c_1/2c_0}})(2+\xi)mA_rt]\nonumber\\
&\leq& \epsilon_l^{(k_0)}\leq 2n^{2k_0-\ln n+2}\leq n^{-\ln
n/2+2},\nonumber
\end{eqnarray}
where we used the assumption that $c_2=c_1\frac{\ln
(\xi(1+\xi/2)+1)}{\ln (C(\xi)^2(\xi(1+\xi/2)+1))}$.

The upper bound can be obtained similarly by noting that the sequence $\{b_{u}^{(i)}-1\}_{i\geq 1}$ decreases by a
multiple factor at least $\frac{1}{\xi(2+\xi)+1}\leq\frac{1}{\xi(1+\xi/2)+1}$ at each step. Hence, we have that
\begin{eqnarray}
\Pr[|T_t(u)-(2+\xi)mA_rt|\geq \frac{1}{(nr^2)^{c_2/2c_0}}mA_rt]\leq
n^{-2}.
\end{eqnarray}
\end{proof}

\subsection{Power Law Distribution of the Generalized Models}
\label{subsec:power} In this subsection, we prove
Theorem~\ref{thm:power}, based on both the result and the proof of
Theorem~\ref{thm:basedeg}.

\begin{proof}[Proof of Theorem~\ref{thm:power}]
Since the local-degree sequences in the hybrid model is exactly the
same as the degree sequences in the base model, by
Theorem~\ref{thm:basedeg}, the local graph of $G_n^\mathrm H$ has
the power law degree distribution.

Now for the self-loop model, in which the degree of a node $v$ can
be expressed as $\deg_t(v)+\delta$, where $\deg_t(v)$ is the number
of non-flexible edges incident to $v$ at time $t$. Now we can write
the recurrence as follows.
\begin{eqnarray}
&&\E[d_{k+\delta}(t+1)|G_t]\nonumber\\
& = & d_{k+\delta}(t) + \sum_{v\in D_{k-1+\delta}(t)}\Big(m\E\Big[\frac{(k-1+\delta)1_{|x_{t+1}-v|\leq r}}{T_t(x_{t+1})}|G_t\Big]\Big)\nonumber \\
& & -\sum_{v\in
D_{k+\delta}(t)}\Big(m\E\Big[\frac{(k+\delta)1_{|x_{t+1}-v|\leq
r}}{T_t(x_{t+1})}|G_t\Big]\Big)+O(m\E[\eta_k(G_t,x_{t+1})|G_t]),
\label{eqn:dkrecur}
\end{eqnarray}
Solving the recurrence, we can also arrive at~(\ref{eqn:fk}), which
gives the solution of the form
$\frac{\phi'_k(m,\delta)}{(k+\delta)^{3+\xi}}$, where
$\phi'_k(m,\delta)$ tends to a limit $\phi'_{\infty}(m,\delta)$
which depends only on $m,\delta$ as $k\to\infty$. This finishes the
proof that the degree sequence of the self-loop model follows a
power law distribution.
\end{proof}

\section{Large Community and Small Expander}~\label{sec:parameter}
In this section, we will prove Theorem 5.

Before proving the result, we give a brief discussion on the choice
of $r$. In the previous sections, we considered the case when
$r=n^{-1/2}(\ln n)^{c_0}$ for some sufficiently large constant
$c_0$. The base model as well as the two generalized models has the
small-community phenomenon and the power law degree distribution.
Now we consider other choices of $r$ and show that if $r$ is too
small or too large, then there is a strong evidence indicating that
the model does not have the power law degree distribution or the
small-community phenomenon, respectively.

When $r$ is as small as $r=n^{-1/2-\epsilon}$, for any $\epsilon>0$,
then every node connects only a very small fraction of
neighbors and the whole graph is almost surely disconnected
(\cite{Pen03:rgg}). Furthermore, there are many isolated vertices in the base
model in this range of $r$, which indicates that the base model is
very unlikely to have the power law degree distribution.

When $r$ is as large as $r=n^{-1/2+\epsilon}$, for any $\epsilon>0$,
we have shown that the models have the power law degree distribution. However, the small-community
phenomenon does not seem to exist in this situation. In particular,
there exists an interesting division of the structure of the
$R$-neighborhood when $R$ varies. Specifically, we have shown
in~\cite{LP11:smallcommunity} that under this range of $r$, if
$R=n^{-1/2+\rho}$ for any $\rho>\epsilon$, then with high
probability, for any $v$, $C_R(v)$ is an $(\alpha,\beta)$-community
for some constants $\alpha,\beta$ of size $\Theta(n^{2\rho})$, which
indicates that every node belongs to some large community. Here we
show that with high probability, for all $R=o(r)$, and for any $v\in
V_n$, the conductance $\mathrm \Phi(C_R(v))$ of $C_R(v)$ is larger
than some constant, which indicates that the $R$-neighborhood is not
a good community.

Now we give the proof of Theorem~\ref{thm:parameter}.
\begin{proof}[Proof of Theorem~\ref{thm:parameter}]
The first part of the theorem is given
in~\cite{LP11:smallcommunity}. Here we prove the second part.

For some fixed $R=o(r)$, we let $C=C_R(v)$ and $C'=C_{r-R}(v)$ for
convenience. Then for any vertex $u\in C$ and $u'\in C'$, the
distance between $u$ and $u'$ is at most $r$. The areas of $B_R(v)$
and $B_{r-R}(v)$ are
\begin{eqnarray}
&&\area(B_R(v))\sim R^2/4\nonumber\\
&&\area(B_{r-R}(v))\sim (r-R)^2/4\sim r^2/4, \nonumber
\end{eqnarray}
respectively, which means that a uniformly generated point will land
in $B_R(v)$ and $B_{r-R}(v)$ with probability $R^2/4$ and $r^2/4$,
respectively.

We will show that there are many edges between $C'\backslash C$ and
$C$. To be more specific, let $C_1$ (or $C'_1$) be the vertices in
$C$ (or $C'$) that were born before or at time $n/2$ and $C_2$ (or
$C'_2$) be the set of vertices in $C$ (or $C'$) that were born after
time $n/2$. We show that the sum of the number of edges
$e(C_1,C'_2)$ between $C_1$ and $C'_2$, and the number of edges
$e(C_2,C'_1)$ between $C_2$ and $C'_1$ are large.

Let $\mathcal{E}$ denote the event that for any $u\in V_n$ and for
each $t\geq t_0$, $T_t(u)\leq 8(2+\xi)mA_rt$, then by
Lemma~\ref{lem:roughbound}, the probability that $\mathcal{E}$ holds
is $1-O(n^{-\ln n})$. Now Conditioned on $\mathcal{E}$, for any
vertex $x_j\in C'_2$, the probability that the $i$-th contact of
$x_j$ lies in $C_1$ is at least
$\frac{(m+\delta)|C_1|}{T_{j-1}(x_j)}\geq
\frac{(1+\xi)|C_1|}{4(2+\xi)A_rn}\geq \frac{|C_1|}{8A_rn}$. Thus,
$|e(C_1,C'_2)|$ dominates $Bi(m|C'_2|,\frac{|C_1|}{8A_rn})$, where
$Bi(N,p)$ denotes the binomial distribution with parameters $N$ and
$p$.

Similarly, for any vertex $x_j\in C_2$, the probability that the
$i$-th contact of $x_j$ lies in $C'_1$ is thus at least
$\frac{(m+\delta)|C'_1|}{T_{j-1}(x_j)}\geq
\frac{(1+\xi)|C'_1|}{4(2+\xi)A_rn}\geq \frac{|C'_1|}{8A_rn}$. Thus,
$|e(C_2,C'_1)|$ dominates $Bi(m|C_2|,\frac{|C'_1|}{8A_rn})$.

Totally, the expected number of edges between the $C$ and
$C'\backslash C$ is
\begin{eqnarray}
\E[|e(C,C'\backslash C)|]\geq
\frac{m|C'_2||C_1|}{8A_rn}+\frac{m|C_2||C'_1|}{8A_rn},\nonumber
\end{eqnarray}
which is at least $m|C|/16$ conditioned on the event $\mathcal{A}$
that $C'_1$ and $C'_2$ are both of size at least $A_rn/4$.
Therefore, by Hoeffdings inequality and the fact that
$\Pr[\neg\mathcal{A}]=O(n^{-3})$, we see that $|e(C,\bar{C})|\geq
|e(C,C'\backslash C)|\geq m|C|/32$ with probability at least
$1-e^{-m|C|/32}$.

On the other hand, $|C|=o(A_rn)$ with high probability. Therefore,
\begin{eqnarray}
\Pr[\exists R=o(r), \exists v, |e(C_{R}(v),\bar{C}_{R}(v))|\leq
m|C_R(v)|/32] \leq \sum_{k=1}^{o(A_rn)}{n\choose
k}e^{-mk/32}=o(1),\nonumber
\end{eqnarray}
where the last inequality follows from the assumption that $m\geq
K\ln n$, for some large constant $K$.

 Finally we note that $\vol(C_R(v))\leq
m|C_R(v)|+|e(C_R(v),\bar{C}_R(v))|$ and then we have
 \begin{eqnarray}
 \Phi(C_R(v))\geq
 \frac{m|C_R(v)|/32}{m|C_R(v)|+m|C_R(v)|/32}=\Omega(1),
 \end{eqnarray}
with high probability. This proves Theorem~\ref{thm:parameter}.
\end{proof}

Finally, we remark that the above proof can be adapted to the two generalized models $G_n^{\mathrm H}$ and $G_n^{\mathrm S}$. Since the number of long edges is relatively small compared with the number of long edges, the effect of long edges do not change the community structure too much. Specifically, to show that for $R=o(r)$, $C_R(v)$ is an expander in $G_n^{\mathrm H}$ and $G_n^{\mathrm S}$, we just need to use that $\vol(C_R(v))\leq
(m+1)|C_R(v)|+|e(C_R(v),\bar{C}_R(v))|$, and $|e(C_R(v),\bar{C}_R(v))|\geq m|C|/32$, which follows exactly the same as above.

\section{Conclusion}\label{sec:conclu}
We investigate the small-community phenomenon in networks and give
two models that unify the three typical properties of large-scale
networks: the power law degree distribution, the small-community
phenomenon and the small diameter property. The proposed network
models provide us insights of how real networks evolve and may have
potential applications in, e.g., wireless ad-hoc model and
sensor networks.

We have shown that the choice of parameters is subtle if one wants
all the three properties to coexist. The fundamental conflicts is
discussed, i.e., the power law degree distribution generated by the
preferential attachment scheme and the small diameter always lead to
an expander like graph, while the small-community phenomenon corresponds naturally to
anti-expander in some sense, which means that the conductance of
many subsets of small size is of order $o(1)$. Other reasons for
such conflicts worth further investigation.

Finally, our proof technique for the power law degree distribution
is of its own interest and it partially solves the open problems
in~\cite{FFV07:gpamodel1}~et al. It is interesting to find other
applications of this method, in particular, in the analysis of
randomized algorithms and network modeling.
%\elecappendix

\section*{Acknowledgments:}The research is partially supported by NSFC
distinguished young investigator award number 60325206, and its
matching fund from the Hundred-Talent Program of the Chinese Academy
of Sciences. Both authors are partially supported by the Grand
Project ``Network Algorithms and Digital Information" of the
Institute of Software, Chinese Academy of Sciences.

\medskip

\bibliographystyle{alphabetic}
\bibliography{geometric}

\medskip

\end{document}